\DeclareMathOperator{\lcm}{\operatorname{lcm}}
\newtheorem{theorem}{Theorem}[section]
\newtheorem{lemma}[theorem]{Lemma}
\newtheorem{proposition}[theorem]{Proposition}
\newtheorem{corollary}[theorem]{Corollary}
\theoremstyle{definition}
\newtheorem{definition}[theorem]{Definition}
\theoremstyle{remark}
\newtheorem{question}[theorem]{Question}
\numberwithin{equation}{section}
\begin{document}

\title{THE PROBABILISTIC ZETA FUNCTION OF A FINITE LATTICE}
% \title[short text for running head]{full title}

%    Only \author and \address are required; other information is
%    optional.  Remove any unused author tags.

%    author one information
% \author[short version for running head]{name for top of paper}
\author{Besfort Shala}
\address{}
\curraddr{}
\email{}

%    author two information
%\author{}
%%\address{}
%\curraddr{}
%\email{}
%\thanks{}

%    \subjclass is required.
\subjclass{}

\date{}

\dedicatory{}

%    Abstract is required.
\begin{abstract} We study Brown's definition of the probabilistic zeta function of a finite lattice as a generalization of that of a finite group. We propose a natural alternative or extension that may be better suited for non-atomistic lattices. The probabilistic zeta function admits a general Dirichlet series expression, which unlike for groups, need not be ordinary. We compute the function for several examples of finite lattices, establishing a connection with the Stirling numbers of the second kind in the case of the divisibility lattice. Furthermore, in the context of moving from groups to lattices, we are interested in lattices with probabilistic zeta function given by ordinary Dirichlet series. In this regard, we focus on partition lattices and $d$-divisible partition lattices. Using the prime number theorem, we show that the probabilistic zeta functions of the latter typically fail to be ordinary Dirichlet series.
\end{abstract}

\maketitle

\section{INTRODUCTION}

Let $G$ be a finite group and $s$ a non-negative integer. Define $P(G, s)$ to be the probability that a randomly chosen $s$-tuple from $G^s$ generates $G$. Hall \cite{hall} essentially found (in slightly different terminology) a finite ordinary Dirichlet series expression for $P(G, s)$, which may be used to extend the definition of $P(G, s)$ to the complex plane. The \textbf{probabilistic zeta function of $G$} is then defined to be the reciprocal of the complex function $P(G, s)$. The name is motivated by the probabilistic interpretation of $1/\zeta(2)$, where $\zeta$ is the Riemann zeta function (see \cite[Theorem 332]{hardywright}). Moreover, the Euler product identity for $\zeta$ gives $$\frac1{\zeta(s)} = \prod_p P(\mathbb Z/p\mathbb Z, s),$$ where $\mathbb Z/p\mathbb Z$ is the cyclic group of prime order $p$. Brown \cite{brown} defined an analogous probabilistic zeta function $1/P(L, s)$ for finite lattices in order to show that $P(G, s)$ depends only on the coset lattice of $G$, that is, a lattice structure associated to $G$. The aim of this paper is to study this definition for lattices in its own right.

We present some details of the derivation of the Dirichlet series for $P(G, s)$ for completeness. Clearly, $P(G,s)\cdot|G|^s$ is the number of $s$-tuples which generate $G$. Since any $s$-tuple in $G^s$ generates some subgroup $H\leq G$, we have \begin{equation}|G|^s = \sum_{H\leq G} P(H, s)\cdot|H|^s.\label{1}\end{equation} We may extract $P(G, s)$ from \eqref{1} via M\"obius inversion for an arbitrary finite partially ordered set (from here on, \textbf{poset}) $\mathscr P$ with partial order $\leq$, a technique first introduced by Hall \cite{hall}. For us, the following ``basic'' description of the procedure will suffice. We would like to have a real valued function $\mu$ defined on $\mathscr P\times\mathscr P$, such that for any two real valued functions $f$ and $g$ defined on $\mathscr P$ with $f(x) = \sum_{y\leq x} g(y)$, we have $$g(x) = \sum_{y\leq x} \mu(y, x)f(y).$$ Plugging in the relation for $f$ in terms of $g$, one easily arrives to the recursive definition $\mu(x, x) = 1$ for all $x\in\mathscr P$ and $\mu(y, x) = -\sum_{y< z\leq x} \mu(z, x)$ for $y<x$.

Using M\"obius inversion on the poset of subgroups of $G$, as well as noticing that \eqref{1} holds for any subgroup $H$ of $G$ (since it holds for an arbitrary group), we obtain \begin{equation} P(G, s) = \sum_{H\leq G} \frac{\mu(H, G)}{[G:H]^s} = \sum_{n=1}^{\infty} \frac{\sum_{H\leq G;\text{ } [G:H] = n}\mu(H, G)}{n^s}\label{3},
\end{equation} where $\mu(H, G)$ are the M\"obius numbers of the poset of subgroups of $G$. The Dirichlet series on the right-hand side of \eqref{3} is finite, has integer coefficients and may be used to extend the domain of $P(G, s)$ to the entire complex plane $\mathbb C$.

Brown \cite{brown} uses a very similar setup and argument to define the probabilistic zeta function of a finite lattice. He showed that the probabilistic zeta function of a finite group $G$ is related to that of its coset lattice $\mathscr C(G)$ via $P(\mathscr C(G), s+1) = P(G, s)$.

In this paper, we give a natural alternative or extension of the probabilistic zeta function of a finite lattice, by using join-irreducible elements as building blocks. The main novelty is that the probabilistic zeta function of a finite lattice, unlike that of a group, need not be an ordinary Dirichlet series. In this context, we begin a study of this lattice invariant in its own right. For example, the probabilistic zeta function on the lattice of flats of a matroid (so, any geometric lattice) probabilistically counts the generating sets for the matroid. In particular, we are interested in lattices which retain the ordinary Dirichlet series structure. This motivates us to define ``strongly'' and ``weakly'' coset-like lattices, both of which have ordinary Dirichlet series expressions for their probabilistic zeta functions. We then compute examples explicitly and investigate coset-like behaviour. In doing so for the $d$-divisible partition lattice, we use a consequence of the prime number theorem on the existence of primes in intervals of the form $[x, (1+\varepsilon)x]$ to show that the $d$-divisible partition lattice is typically not ``strongly'' coset-like.

\subsection*{Structure of the paper} This paper is structured as follows. Section 2 contains the basic definitions and theorems we shall need throughout the paper. In Section 3, we propose a natural alternative definition of the probabilistic zeta function $P(L, s)$ for finite lattices, which may be better-suited for non-atomistic lattices. For atomistic lattices, our definition is identical to the one given by Brown \cite{brown}.

Section 4 contains computations of $P(L, s)$ for a number of examples of finite lattices as well as some connections with well-known identities. For example, in the case of the divisibility lattice of a square-free integer (i.e. the Boolean lattice), we recover the Stirling numbers of the second kind. This gives a natural extension of the Stirling numbers when considering the non-restricted divisibility lattice. In Section 5, we define coset-like lattices and investigate examples further. 

Section 6 contains a discussion on the compatibility of the the probabilistic zeta function with the lower reduced product of lattices. Finally, we present some questions that remain open in Section 7.

\subsection*{Acknowledgements} I would like to thank my mentor, Professor Russ Woodroofe, for proposing this research project as well as for his continuous support and helpful suggestions throughout our work together. A very special thanks goes to Håvard Damm-Johnsen for setting up and running a computer program to aid an example searching process. I also express my gratitude towards Milo\v s Puđa for all the conversations and help in computations.

\section{PRELIMINARIES}

We use following definition of a lattice.

\begin{definition}
Let $L$ be a non-empty set with two commutative, associative and idempotent binary operations $\vee :  L\times L\to L$ (the \textbf{join}) and $\wedge :  L\times L \to L$ (the \textbf{meet}) such that for all $x, y\in L$, the meet and the join are related in the following way: $$x\vee(x\wedge y) = x \text{ and } x\wedge (x\vee y) = x.$$ The algebraic structure $( L, \vee, \wedge)$ is said to be a \textbf{lattice}.   
\end{definition} 

The above definition is equivalent to the following one from order theory (see \cite[Chap. I, Sect. 1]{gratzer} for details) and we shall use them interchangeably as deemed appropriate.

\begin{proposition}
Given a lattice $L$, we may define a partial order $\leq$ on $L$ given by $x\leq y$ when $x\vee y = y$. Moreover, any two elements of $L$ have a unique supremum and a unique infimum with respect to $\leq$, which coincide with the join and the meet of the two elements, respectively. Conversely, we can construct a lattice from any poset with this property.
\end{proposition}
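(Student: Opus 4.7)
The plan is to verify the three claims in turn: that $\leq$ as defined is a partial order, that join and meet realize supremum and infimum, and that the construction reverses in the converse direction. Throughout, I will lean on the absorption laws to toggle between the two characterizations $x \leq y \iff x \vee y = y$ and $x \leq y \iff x \wedge y = x$; indeed, if $x \vee y = y$, then $x \wedge y = x \wedge (x \vee y) = x$ by absorption, and symmetrically for the other direction.

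For the first claim, reflexivity is just idempotence ($x \vee x = x$), antisymmetry follows from commutativity (if $x \vee y = y$ and $y \vee x = x$ then $x = y$), and transitivity follows from associativity: if $x \vee y = y$ and $y \vee z = z$, then $x \vee z = x \vee (y \vee z) = (x \vee y) \vee z = y \vee z = z$. For the second claim, that $x \vee y$ is the least upper bound, I first note $x \vee (x \vee y) = (x \vee x) \vee y = x \vee y$, so $x \leq x \vee y$, and symmetrically $y \leq x \vee y$. For minimality, if $x \leq z$ and $y \leq z$, then $(x \vee y) \vee z = x \vee (y \vee z) = x \vee z = z$, so $x \vee y \leq z$. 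The argument that $x \wedge y$ is the greatest lower bound is dual, using the equivalent description of $\leq$ via the meet.

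For the converse, starting from a poset $(L, \leq)$ in which every pair has a supremum and infimum, I define $x \vee y := \sup\{x, y\}$ and $x \wedge y := \inf\{x, y\}$. Commutativity and idempotence are immediate from the definitions of sup and inf. Associativity comes from observing that both $(x \vee y) \vee z$ and $x \vee (y \vee z)$ satisfy the universal property of $\sup\{x, y, z\}$ and are therefore equal by uniqueness; the meet case is dual. Absorption is then checked directly: $x \wedge y \leq x$ gives $x \vee (x \wedge y) = x$, and $x \leq x \vee y$ gives $x \wedge (x \vee y) = x$.

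No step here is a real obstacle; the only thing that requires mild care is to consistently translate between the algebraic identities and the order relation, and to use the absorption laws precisely when switching between the two formulations of $\leq$. A reader wanting full detail can consult \cite[Chap. I, Sect. 1]{gratzer} as already cited.
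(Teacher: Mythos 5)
Your proof is correct and complete: the paper itself offers no proof of this proposition, simply citing \cite[Chap.~I, Sect.~1]{gratzer}, and your argument is precisely the standard verification found there (absorption to toggle between the $\vee$- and $\wedge$-characterizations of $\leq$, idempotence/commutativity/associativity for the order axioms and the sup/inf properties, and the universal property of $\sup\{x,y,z\}$ for associativity in the converse direction). Nothing is missing; this fills in the details the paper delegates to the reference.
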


As we are only interested in finite lattices, from here on, the word `lattice' should be interpreted as `finite lattice'. 

It is well-known that a lattice has unique identities for both of its binary operations. We write $\widehat 0$ (and call it the \textbf{bottom} element) and $\widehat 1$ (the \textbf{top} element) for the identities of $ L$ with respect to $\vee$ and $\wedge$, respectively. We say an element $x\in L\setminus\{\widehat 0\}$ is \textbf{join-irreducible} if it cannot be written as a non-trivial join of elements of $ L$. Equivalently (due to finiteness), an element $x$ is join-irreducible if $x = a\vee b$ implies that $a = x$ or $b = x$. Furthermore, given an element $x\in L\setminus\{\widehat 0\}$, it is either join-irreducible or else may be expressed as a non-trivial join, say $a\vee b$. Repeating the same argument for $a$ and $b$ and continuing in this manner, due to finiteness, we obtain a ``factorization'' of $x$ as a join of join-irreducible elements of $L$. In this sense, join-irreducible elements serve as building blocks for lattices.

Next, we lay out the theorems that we shall need from number theory.

\begin{theorem}[The Prime Number Theorem \cite{apostol}]
For a real number $x$, let $\pi(x)$ be the number of prime numbers less than or equal to $x$. Then, $$\lim_{x\to\infty} \frac{\pi(x)}{\frac{x}{\log x}} = 1.$$ In particular, for any given $\varepsilon > 0$, there exists $N=N(\varepsilon)$ such that for all $x\geq N$, there is a prime number between $x$ and $(1+\varepsilon)x$. \label{pnt}
\end{theorem}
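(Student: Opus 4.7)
The plan is to treat the asymptotic $\pi(x)\sim x/\log x$ as a black box, since its proof (via the non-vanishing of the Riemann zeta function on the line $\Re s = 1$ and a suitable Tauberian argument, or via elementary methods of Erd\H{o}s and Selberg) is classical and fully outside the scope of this paper; Apostol's treatment is cited precisely so that we may use the statement off the shelf. The only content left to justify is the ``in particular'' clause, i.e.\ the existence of a prime in $[x,(1+\varepsilon)x]$ for all sufficiently large $x$.

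For the corollary, the key step is to estimate the counting function $\pi((1+\varepsilon)x) - \pi(x)$ from below. Applying the PNT at both $y=x$ and $y=(1+\varepsilon)x$, I would write
\[
\pi(x) = \frac{x}{\log x}\bigl(1+o(1)\bigr), \qquad \pi\bigl((1+\varepsilon)x\bigr) = \frac{(1+\varepsilon)x}{\log\bigl((1+\varepsilon)x\bigr)}\bigl(1+o(1)\bigr),
\]
as $x\to\infty$. Since $\log((1+\varepsilon)x) = \log x + \log(1+\varepsilon)$, the ratio $\log((1+\varepsilon)x)/\log x$ tends to $1$, so the second expression simplifies to $\frac{(1+\varepsilon)x}{\log x}(1+o(1))$. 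Subtracting yields
\[
\pi\bigl((1+\varepsilon)x\bigr) - \pi(x) = \frac{\varepsilon x}{\log x}\bigl(1+o(1)\bigr),
\]
which tends to $+\infty$. In particular, there exists $N=N(\varepsilon)$ such that this difference is at least $1$ for all $x\geq N$, which is exactly the assertion that the half-open interval $(x,(1+\varepsilon)x]$ contains at least one prime.

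The only genuine obstacle here is the PNT itself; the deduction above is routine asymptotic bookkeeping. One could alternatively cite stronger explicit results (Dusart-type bounds, or for specific values of $\varepsilon$ such as $\varepsilon=1$, Bertrand's postulate with a fully elementary proof), but the qualitative statement used in this paper needs nothing beyond the standard form of the PNT invoked above.
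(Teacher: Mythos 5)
Your proposal is correct and matches the paper's treatment: the paper states Theorem \ref{pnt} purely as a citation to Apostol, with the ``in particular'' clause taken as the standard consequence of the asymptotic, which is exactly the routine deduction you carry out (noting $\log((1+\varepsilon)x)\sim\log x$ and concluding $\pi((1+\varepsilon)x)-\pi(x)=\frac{\varepsilon x}{\log x}(1+o(1))\to\infty$). Your write-up simply makes explicit the bookkeeping the paper leaves to the reference, so there is nothing to correct.
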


In the context of the above statement of the prime number theorem and its consequence, the following theorem of Nagura \cite{nagura} gives an explicit value of $N(\varepsilon = 1/5)$.

\begin{theorem} There exists a prime between $n$ and $6n/5$ for any $n\geq 25$.\label{nagurathm}\end{theorem}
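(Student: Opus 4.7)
The plan is to reformulate the statement as $\theta(6n/5) > \theta(n)$ for all $n \geq 25$, where $\theta(x) = \sum_{p \leq x} \log p$ is the first Chebyshev function; this is equivalent since the interval $[n, 6n/5]$ contains a prime if and only if $\theta$ strictly increases across it. I would then split the problem into two regimes: a large-$n$ argument via explicit Chebyshev-type bounds on $\theta$, and a finite direct verification for the small cases.

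For the large-$n$ range, the goal is to establish constants $c_1 < c_2$ with $(6/5) c_1 > c_2$ such that $c_1 x - o(x) \leq \theta(x) \leq c_2 x + o(x)$ for $x$ sufficiently large. The classical derivation proceeds by studying a suitably chosen multinomial coefficient $M(n)$, namely a quotient of factorials, whose $p$-adic valuations are controlled via Legendre's formula $v_p(n!) = \sum_{k \geq 1} \lfloor n/p^k \rfloor$. Stirling's formula supplies an asymptotic for $\log M(n)$ from above, while the prime factorization rewrites $\log M(n)$ as a sum of terms of the form $\theta(x)$ evaluated at a small number of points (plus lower-order contributions of size $O(\sqrt{x} \log x)$ from prime powers). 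Matching the two expressions for $\log M(n)$ yields the desired bounds on $\theta$, and so produces an explicit threshold $N_0$ beyond which $\theta(6n/5) - \theta(n) > 0$ automatically holds.

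For the remaining range $25 \leq n < N_0$, which is a finite set of integers, the claim is verified by direct inspection: for each such $n$ one exhibits a prime lying in $[n, 6n/5]$. A short table suffices, with entries such as $29 \in [25, 30]$, $29 \in [26, 31.2]$, \ldots, $31 \in [30, 36]$, $37 \in [32, 38.4]$, and so on up to $N_0$.

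The main obstacle is the tight numerical constraint $(6/5) c_1 > c_2$, i.e., the ratio of the upper to lower Chebyshev bounds on $\theta$ must be strictly less than $6/5 = 1.2$. The crude bounds coming from $\binom{2m}{m} \leq 4^m$ give a ratio of roughly $\log 4 / \log 2 \approx 1.39$, which is far too weak. Finding a multinomial $M(n)$ sharp enough to close the gap, while also keeping the error terms explicit enough that $N_0$ is small enough to make the finite check tractable, is where the real work lies; this is essentially the content of Nagura's paper \cite{nagura}.
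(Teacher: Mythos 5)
The paper does not prove this statement at all: it is quoted as a known explicit result of Nagura, cited precisely because its content is a set of explicit numerical estimates whose derivation lies outside the paper's scope. Judged as a proof, your proposal has a genuine gap: every quantitatively hard step is named but deferred. You never exhibit the factorial ratio $M(n)$, never derive constants $c_1, c_2$ with $(6/5)c_1 > c_2$, never make the Stirling and prime-power error terms explicit, and never compute the threshold $N_0$ or perform the finite check up to it — and you concede this by saying the real work "is essentially the content of Nagura's paper." Since the qualitative statement (a prime in $[n,(1+\varepsilon)n]$ for all sufficiently large $n$) already follows from the prime number theorem, which the paper states separately as Theorem \ref{pnt}, the entire content of this theorem \emph{is} the explicit constants and the explicit threshold; an argument that defers them establishes nothing beyond what Theorem \ref{pnt} gives.

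Two details in the outline also need correction. First, $\log 4/\log 2 = 2$, not $\approx 1.39$: the crude bounds from $\binom{2m}{m}$ give upper and lower constants $\log 4$ and $\log 2$ for $\theta(x)/x$, hence a ratio of $2$. Second, and more importantly, the natural sharpening — Chebyshev's classical weighting built from the points $x, x/2, x/3, x/5, x/30$ — produces upper and lower constants whose ratio is exactly $6/5$, which is still insufficient, since you need strict inequality with room left over to absorb the error terms and to reach a threshold $N_0$ small enough for hand verification down to $n=25$. This is why Nagura's argument requires genuinely finer explicit estimates (comparing $\psi$ and $\theta$ via Gamma-function bounds, arriving at constants roughly $0.916 < \theta(x)/x$ and $\psi(x)/x < 1.086$). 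In other words, the step you set aside as "where the real work lies" is not a routine optimization over multinomials; it is the theorem itself.
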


\section{THE PROBABILISTIC ZETA FUNCTION OF A FINITE LATTICE}

Let $ L$ be a lattice with distinct identities $\widehat 0$ and $\widehat 1$. Furthermore, let $J = J(L)$ be the set of join-irreducible elements in $ L$. We say an $s$-tuple $(x_1, \ldots, x_s)$ in $J^s$ \textbf{generates up to} $x\in L\setminus\{\widehat 0\}$ if $\bigvee_{i=1}^s x_i = x$. In this case, $x_i\leq x$ for each $i$, therefore all components of such $s$-tuples are elements of $$J_x := \{j\in J : j\leq x\}.$$ If $(x_1, \ldots, x_s)$ generates up to $\widehat 1$, we say the $s$-tuple \textbf{generates} $ L$. Let $P( L, x, s)$ be the probability that a randomly chosen $s$-tuple from $J_x^s$ generates up to $x$ and define $P( L, s) := P( L, \widehat 1, s)$. Clearly, $P( L, x, s)|J_x|^s$ is the number of $s$-tuples in $J_x^s$ which generate up to $x$. Since every $s$-tuple in $J_x^s$ generates up to some $\widehat 0 < y\leq x$, we have $$|J_x|^s = \sum_{\widehat 0 < y\leq x} P( L, y, s)|J_y|^s.$$ Applying M\"obius inversion on $ L\setminus\{\widehat 0\}$ as the underlying poset and plugging in $x=\widehat 1$, we obtain \begin{equation}P( L, s) = \sum_{x\in L\setminus\{\widehat 0\}} \frac{\mu(x, \widehat 1)}{[J : J_x]^s},\label{4}\end{equation} where $[J : J_x] := |J|/|J_x|$ is the ratio of the number of join-irreducibles in $L$ and the number of join-irreducibles less than or equal to $x$ (notice that $J_{\widehat 1} = J$). We note here that if $x<\widehat 1$ is maximal in $L$, i.e. a \textbf{coatom}, then $\mu(x, \widehat 1) = -1$ by definition of the M\"obius numbers. As before, the expression on the right-hand side could be used to extend the domain of $P( L, s)$ to the entire complex plane $\mathbb C$. We refer to $P(L, s)$ as the \textbf{probabilistic zeta function of $L$}. We remark that this is not consistent with the definition of the probabilistic zeta function of a finite group, which is defined to be $1/P(G, s)$, but we choose convenience over consistency.

Some important remarks are due here. The definition of $P(L, s)$ found in the last section of \cite{brown} differs slightly from the one we have just given. This is because Brown sets $J$ to be the set of minimal elements of $L\setminus\{\widehat 0\}$, namely the \textbf{atoms} of $L$. However, we may well be in a situation where the join of all atoms is not $\widehat 1$, and consequently where none of the $s$-tuples would generate the whole lattice. Any chain of length at least two is an example of this situation; a more interesting example is the divisibility lattice of any non square-free integer. We avoid this degeneracy via our definition, since the join of all join-irreducible elements below a given element $x$ of a lattice is equal to $x$ (so this holds for $x=\widehat 1$ in particular). Indeed, note that the two definitions are equivalent for \textbf{atomistic} lattices, namely lattices where the join-irreducible elements are precisely the atoms.

Brown \cite{brown} related the probabilistic zeta function of a finite group to that of an atomistic lattice associated to the group (in fact, this is the primary reason why Brown introduced the concept of the probabilistic zeta function for lattices). For a finite group $G$, let $\mathscr C(G)$ be the set of all cosets of all subgroups of $G$, together with the empty set. Then, $\mathscr C(G)$ is a lattice with meet given by set intersection, and join given by $x_1H_1\vee x_2H_2 = x_1H = x_2H$, where $H = \langle x_1^{-1}x_2, H_1, H_2 \rangle$. We shall refer to $\mathscr C(G)$ as the \textbf{coset lattice of $G$}. Indeed, an element of $\mathscr C(G)$ is join-irreducible if and only if it is a coset of the identity group, so $J(\mathscr C(G))$ corresponds to $G$. Brown proved that \begin{equation}P(\mathscr C(G), s+1) = P(G, s)\label{eq2}\end{equation} by noticing that an $(s+1)$-tuple $(x_0, x_1, \ldots, x_s)$ generates $\mathscr C(G)$ if and only if the $s$-tuple $(x_0^{-1}x_1, x_0^{-1}x_2, \ldots, x_0^{-1}x_s)$ generates $G$. Of course, $\mathscr C(G)$ is atomistic, hence \eqref{eq2} holds also for our definition of the probabilistic zeta function.

\section{COMPUTATION OF EXAMPLES}

In this section, we compute the probabilistic zeta function on a number of examples of lattices, establishing connections with well-known identities. 

\subsection{Divisibility Lattice}

Let $\mathcal O_n = \{d\in\mathbb N : d\mid n\}$ be the set of positive divisors of a positive integer $n>1$ with canonical factorization $p_1^{\alpha_1}p_2^{\alpha_2}\cdots p_r^{\alpha_r}$, where $p_1, \ldots, p_r$ are distinct primes and $\alpha_i\geq 1$ for all $i=1, 2, \ldots, r$. Indeed, $\mathcal O_n$ is a lattice with join $\lcm$ and meet $\gcd$. The join-irreducible elements of $\mathcal O_n$ are precisely the prime powers $p_i, p_i^2, \ldots, p_i^{\alpha_i}$ for each $i=1, 2, \ldots, r$. Thus, there are $\sum_1^r \alpha_i$ join-irreducible elements in total. For a given $d\in\mathcal O_n$, we may write $d=p_1^{\beta_1}p_2^{\beta_2}\cdots p_r^{\beta_r}$, where $0\leq\beta_i\leq \alpha_i$ for all $i=1, 2, \ldots, r$, so the number of join-irreducible elements which are at most $d$ is $\Omega_d = \sum_1^r \beta_i$ (in particular, $\Omega_n = \sum_1^r \alpha_i$). The M\"obius numbers of the divisibility lattice (as a poset) are closely related to the usual number-theoretic M\"obius function. This is the content of the following lemma from \cite[Chap. 3, Sect. 8]{stanley}.

\begin{proposition}
The M\"obius numbers $\mu(d, n)$ of the divisibility lattice $\mathcal O_n$ are given by $\mu(d, n) = \mu(n/d)$, where the $\mu$ on the right-hand side is the usual number-theoretic M\"obius function.
\end{proposition}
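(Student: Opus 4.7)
The plan is to proceed by induction on $m := n/d$, equivalently by downward induction on the divisor $d$, using the recursion $\mu(y,x) = -\sum_{y < z \leq x} \mu(z,x)$ recorded earlier in the excerpt. The base case $m = 1$ is immediate: $\mu(n,n) = 1 = \mu(1)$.

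For the inductive step, fix $d \mid n$ with $d < n$, and suppose $\mu(d', n) = \mu(n/d')$ holds for every $d'$ with $d < d' \leq n$ in the divisibility order. By the recursion,
$$\mu(d, n) = -\sum_{d < d' \leq n} \mu(d', n).$$
The key observation is that the map $d' \mapsto e := n/d'$ is a bijection from $\{d' : d < d' \leq n\}$ onto the proper divisors of $m = n/d$. Applying this change of variables together with the inductive hypothesis rewrites the right-hand side as $-\sum_{e \mid m,\, e < m} \mu(e)$, where $\mu$ now denotes the number-theoretic M\"obius function.

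To finish, one invokes the classical identity $\sum_{e \mid m} \mu(e) = 0$ for $m > 1$, which rearranges to $\mu(m) = -\sum_{e \mid m,\, e < m} \mu(e)$. Combining gives $\mu(d, n) = \mu(m) = \mu(n/d)$, closing the induction.

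The main thing to keep straight is the bijection $d' \leftrightarrow n/d'$ between the upper interval $(d, n]$ in $\mathcal O_n$ and the proper divisors of $n/d$; once that is in hand, the proof is essentially a reindexing followed by the standard M\"obius identity. A more structural alternative would be to exploit the product decomposition $\mathcal O_n \cong \prod_i \mathcal O_{p_i^{\alpha_i}}$ together with the multiplicativity of the M\"obius function on products of posets: one computes $\mu_{\mathcal O_{p^\alpha}}(1, p^\alpha)$ by inspection on a chain (it equals $-1$ if $\alpha = 1$ and $0$ otherwise), and the product formula then reproduces $\mu(n)$ directly, after which the general case follows via the interval isomorphism $[d, n] \cong \mathcal O_{n/d}$.
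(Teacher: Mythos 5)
Your proof is correct. Note that the paper itself does not prove this proposition; it quotes it from Stanley (Chap.~3, Sect.~8), where the standard argument is essentially the structural one you sketch at the end: decompose $\mathcal O_n$ as a product of chains $\prod_i \mathcal O_{p_i^{\alpha_i}}$, compute $\mu(1, p^{\alpha})$ on a chain by inspection, and invoke multiplicativity of the M\"obius function over products of posets (the same fact the paper later records as its Proposition on products), together with the interval isomorphism $[d,n]\cong\mathcal O_{n/d}$. Your primary argument is a genuinely different and more elementary route: it runs the defining recursion $\mu(d,n) = -\sum_{d < d' \le n}\mu(d',n)$ against the classical identity $\sum_{e \mid m}\mu(e) = 0$ for $m>1$, using the reindexing bijection $d' \mapsto n/d'$ from the open upper interval above $d$ onto the proper divisors of $m = n/d$; strong induction on the integer $m$ makes this legitimate, since every such $d'$ satisfies $n/d' < m$. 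What your induction buys is self-containedness at the lattice level, with no need for the poset product theorem, at the price of importing one standard number-theoretic identity. What the product-of-chains approach buys is structural insight: it makes transparent both the multiplicativity of the answer and the vanishing of $\mu(d,n)$ whenever $n/d$ is not squarefree, which is exactly the corollary the paper exploits immediately after stating the proposition.
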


As an immediate corollary, $\mu(d, n)$ is non-zero if and only if $n/d$ is square-free. In this case, we have $\mu(d, n) = \mu(n/d) = (-1)^{\Omega_n - \Omega_d}$ and $\Omega_d\geq\sum_{i=1}^r (\alpha_i - 1) = \Omega_n - r$. 

Putting everything together, we obtain \begin{align} P(\mathcal O_n, s) & =  \sum_{1<d\mid n} \frac{\mu(d, n)}{[J: J_d]^s}  =  \sum_{\substack{1<d\mid n; \nonumber}} \mu^2\left(\frac{n}{d}\right)\frac{(-1)^{\Omega_n - \Omega_d}}{(\Omega_n/\Omega_d)^s} \\  & =  \frac{1}{\Omega_n^s}\sum_{k=\Omega_n-r}^{\Omega_n}(-1)^{\Omega_n - k}\sum_{\substack{1<d\mid n;\\\text{ }\Omega_d = k}}\mu^2\left(\frac nd\right)k^s \nonumber\end{align}\begin{align} &= \frac{(-1)^{\Omega_n}}{\Omega_n^s} \sum_{k=\Omega_n-r}^{\Omega_n}(-1)^{k} \binom{r}{\Omega_n-k}  k^s. \label{6}
\end{align} The last equality follows as there are precisely $\binom{r}{\Omega_n - k}$ divisors $d$ of $n$ such that $n/d$ is square-free and $\Omega_d = k$ (from the $r$ primes, we may choose any $\Omega_n-k$ of them to set their exponent equal to $\alpha_i - 1$). 

Let us now consider the special case when $\Omega_n = r$, i.e. when $n$ is square-free. The expression \eqref{6} simplifies to \begin{equation}P(\mathcal O_{p_1\cdots p_r}, s) = \frac{(-1)^r}{r^s}\sum_{k=1}^r (-1)^k\binom{r}{k}k^s.\label{7}\end{equation} Firstly, since $\mathcal O_{p_1\cdots p_r}$ and the Boolean lattice $\mathcal B_r$ of all subsets of $\{1, 2, \ldots, r\}$ ordered by inclusion are isomorphic, \eqref{7} is also the probabilistic zeta function of $\mathcal B_r$.

Secondly, notice that for positive integer values of $s$, \eqref{7} may be rewritten as $r!S(s, r)/r^s$ where $S(s, r)$ is the Stirling number of the second kind (see \cite[1.94a]{stanley}), namely the number of ways of partitioning $s$ elements into $r$ non-empty parts. Of course, an $s$-tuple generates $\mathcal O_{p_1\cdots p_r}$ if and only if it contains all of the primes $p_1, \ldots, p_r$ and for $r$ given primes, there are precisely $r!S(s, r)$ ways of distributing the positions $1, 2, \ldots, s$ of the $s$-tuple to the $r$ ordered parts determined by the primes. In other words, $r!S(s, r)$ is the number of $s$-tuples in $\{p_1, \ldots, p_r\}^s$ which contain all of $p_1, \ldots, p_r$. We summarize this in the following corollary. 

\begin{corollary}
For positive integer values of $s$, we have $$P(\mathcal B_r, s) = \frac{r!}{r^s}S(s, r),$$ where $\mathcal B_r$ is the Boolean lattice and $S(s, r)$ is the Stirling number of the second kind.
\end{corollary}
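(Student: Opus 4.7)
The plan is to give a direct combinatorial argument, which seems cleaner than extracting the identity algebraically from the Dirichlet series in equation~\eqref{7}. First, I would observe that the Boolean lattice $\mathcal B_r$ is isomorphic to $\mathcal O_{p_1\cdots p_r}$ (as noted in the paper), and that under this isomorphism the set $J$ of join-irreducibles corresponds to the $r$ atoms of $\mathcal B_r$ (the singletons, or equivalently the primes $p_1,\ldots, p_r$). Since $J_{\widehat 1}=J$, we have $|J_{\widehat 1}|^s=r^s$, so by the definition of $P(\mathcal B_r, s)$ it suffices to count the $s$-tuples in $J^s$ whose join equals $\widehat 1$.

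The key observation is that in $\mathcal B_r$, the join of any collection of atoms is precisely the union of the corresponding singletons. Thus an $s$-tuple $(x_1, \ldots, x_s)\in J^s$ generates $\mathcal B_r$ if and only if every one of the $r$ atoms appears among its entries; equivalently, the tuple, viewed as a function $\{1,\ldots, s\}\to\{p_1, \ldots, p_r\}$, is surjective. It is a standard combinatorial identity that the number of surjections from an $s$-set onto an $r$-set equals $r!\,S(s, r)$: choose an (unordered) partition of $\{1, \ldots, s\}$ into $r$ non-empty blocks in $S(s,r)$ ways, then assign the blocks to the $r$ atoms in $r!$ ways. Dividing by $r^s$ gives the claim.

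As a sanity check, I would verify consistency with the Dirichlet series formula already derived. Equation~\eqref{7} states
\[
P(\mathcal O_{p_1\cdots p_r}, s) = \frac{(-1)^r}{r^s}\sum_{k=1}^r (-1)^k\binom{r}{k}k^s = \frac{1}{r^s}\sum_{k=1}^{r}(-1)^{r-k}\binom{r}{k}k^s,
\]
and this matches $r!\,S(s, r)/r^s$ via the classical explicit formula $S(s, r) = \frac{1}{r!}\sum_{k=0}^{r}(-1)^{r-k}\binom{r}{k}k^s$ (the $k=0$ term vanishes for positive integer $s$). I do not anticipate any real obstacle: once the join-irreducibles of $\mathcal B_r$ are correctly identified as the atoms, the surjection bijection is immediate, and the only mild point to mind is the convention that $P(\mathcal B_r, s)$ is defined with $|J|^s$ in the denominator rather than $|\mathcal B_r|^s$, which is exactly what makes the clean factor $r!/r^s$ appear.
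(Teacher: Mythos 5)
Your proposal is correct and takes essentially the same approach as the paper: the paper likewise observes that an $s$-tuple of join-irreducibles generates the lattice if and only if it contains all $r$ primes (atoms), counts such tuples as $r!\,S(s,r)$ by distributing the $s$ positions among the $r$ primes, and notes that this agrees with the alternating-sum expression in \eqref{7} via the standard explicit formula for Stirling numbers of the second kind. The only difference is one of emphasis — the paper leads with the algebraic rewriting of \eqref{7} and then gives the surjection count, while you lead with the combinatorial count and use the algebra as a check — but the content is the same.
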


Returning to \eqref{6} and noticing that an $s$-tuple generates $\mathcal O_n$ if and only if it contains all of the $r$ maximal prime powers $p_1^{\alpha_1}, \ldots, p_r^{\alpha_r}$, we get that \eqref{6} is a possible generalization of the Stirling numbers of the second kind, in the sense that $\Omega_n^s P(\mathcal O_n, s)$ is the number of $s$-tuples over a set with $\Omega_n$ elements which contain all elements of a fixed subset with $r$ elements. Generalizations of the Stirling numbers of the second kind have been studied in \cite{griffiths, afrika}.

\subsection{Subspace Lattice of a Finite-Dimensional Vector Space Over a Finite Field}

Let $q$ be a prime power and consider the set $\mathcal S(\mathbb F_q^n)$ of all subspaces of the vector space $\mathbb F_q^n$ over the field with $q$ elements $\mathbb F_q$. Indeed, $\mathcal S(\mathbb F_q^n)$ is a lattice with join $+$ (i.e. addition of vector subspaces) and meet $\cap$. The join-irreducible elements of $\mathcal S(\mathbb F_q^n)$ are precisely its atoms, that is, the $1$-dimensional subspaces of $\mathbb F_q^n$. Thus, the number of join-irreducible elements in $\mathcal S(\mathbb F_q^n)$ is $(q^n - 1)/(q-1)$. 

Now, let $V\leq\mathbb F_q^n$ be a vector subspace. Notice that $|J_V|$, the number of join-irreducible elements which are at most $V$, that is to say, the number of $1$-dimensional vector subspaces of $V$, is precisely $(q^{\dim V} - 1)/(q-1)$. The M\"obius numbers were found by Hall \cite{hall}. \begin{proposition} The M\"obius numbers $\mu(V, \mathbb F_q^n)$ of the subspace lattice $\mathcal S(\mathbb F_q^n)$ are given by $\mu(V, \mathbb F_q^n) = (-1)^{n-\dim V}q^{\binom{n-\dim V}{2}}$. \end{proposition}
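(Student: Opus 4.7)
The plan is to exploit the local self-similarity of the subspace lattice to reduce the claim to a single base computation, then solve the resulting recurrence using the $q$-binomial theorem.

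First, I would observe that for every subspace $V\leq\mathbb F_q^n$, the interval $[V,\mathbb F_q^n]$ in $\mathcal S(\mathbb F_q^n)$ is order-isomorphic to $\mathcal S(\mathbb F_q^n/V)\cong\mathcal S(\mathbb F_q^{n-\dim V})$ via the quotient map $U\mapsto U/V$. Since M\"obius numbers depend only on the interval structure between the two arguments, it suffices to treat the case $V=0$; that is, the task reduces to verifying that $a_m:=\mu(0,\mathbb F_q^m)$ equals $(-1)^m q^{\binom{m}{2}}$.

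Next, I would set up a recurrence for $a_m$. Unwinding the paper's recursive definition of $\mu$ gives the standard identity $\sum_{0\leq U\leq\mathbb F_q^n}\mu(U,\mathbb F_q^n)=0$ for every $n\geq 1$. The interval isomorphism above shows that $\mu(U,\mathbb F_q^n)=a_{n-\dim U}$. Grouping the sum by the dimension of $U$ and recalling that the number of $k$-dimensional subspaces of $\mathbb F_q^n$ is the Gaussian binomial coefficient $\binom{n}{k}_q$, I obtain
\[
\sum_{k=0}^{n}\binom{n}{k}_q\,a_{n-k}=0\qquad(n\geq 1),
\]
together with $a_0=1$. These conditions determine the sequence $(a_m)$ uniquely.

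Finally, I would verify that $(-1)^m q^{\binom{m}{2}}$ solves this recurrence using the $q$-binomial theorem
\[
\prod_{i=0}^{n-1}(1+q^i x)=\sum_{k=0}^{n}q^{\binom{k}{2}}\binom{n}{k}_q x^k.
\]
Substituting $x=-1$ causes the left-hand side to vanish for $n\geq 1$ (the $i=0$ factor equals $1-1=0$), giving $\sum_{k=0}^n(-1)^k q^{\binom{k}{2}}\binom{n}{k}_q=0$; after using the symmetry $\binom{n}{k}_q=\binom{n}{n-k}_q$ and reindexing, this matches the recurrence with $a_k=(-1)^k q^{\binom{k}{2}}$.

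The main obstacle I anticipate is not computational but structural: the argument hinges on cleanly establishing the interval isomorphism, which is what reduces the two-variable M\"obius number to a single-variable sequence and makes the recurrence self-contained. Everything after that reduction is standard manipulation, and the $q$-binomial identity delivers the formula in one stroke.
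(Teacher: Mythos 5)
Your proof is correct, but there is nothing in the paper to compare it against: the paper states this proposition with only a citation to Hall and gives no argument of its own, so you have supplied a proof where the paper has none. Your route is the standard self-contained one, and each step holds up. The correspondence theorem for quotient spaces gives the lattice isomorphism $[V,\mathbb F_q^n]\cong\mathcal S(\mathbb F_q^n/V)\cong\mathcal S(\mathbb F_q^{\,n-\dim V})$; the fact that $\mu(y,x)$ depends only on the isomorphism type of the interval $[y,x]$ follows by a short induction from the paper's recursive definition of $\mu$ (worth stating explicitly, since the paper only defines $\mu$ recursively and never records this invariance); taking $y=\widehat 0$ in that definition gives $\sum_{U\leq\mathbb F_q^n}\mu(U,\mathbb F_q^n)=0$ for $n\geq 1$, hence your recurrence
\begin{equation*}
\sum_{k=0}^{n}\binom{n}{k}_q a_{n-k}=0\qquad(n\geq 1),\qquad a_0=1,
\end{equation*}
which indeed determines $(a_m)$ uniquely because the $k=0$ term isolates $a_n$. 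Finally, putting $x=-1$ in the $q$-binomial theorem kills the product via the $i=0$ factor, and the symmetry $\binom{n}{k}_q=\binom{n}{n-k}_q$ converts the resulting identity into exactly your recurrence evaluated at $a_k=(-1)^kq^{\binom{k}{2}}$. The one thing your write-up gains over the paper's treatment is self-containedness; the one thing it costs is length, and if you wanted an alternative you could instead verify the formula directly by induction on $\dim V$ using Weisner's theorem or a counting-by-complements argument, but your recurrence-plus-$q$-binomial route is arguably the cleanest.
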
 Thus, we immediately obtain \begin{eqnarray*}P(\mathcal S(\mathbb F_q^n), s) & = & \frac{1}{(q^n-1)^s} \sum_{0\neq V\leq\mathbb F_q^n} (-1)^{n-\dim V}q^{\binom{n-\dim V}{2}}(q^{\dim V}-1)^s\\ & = & \frac{1}{(q^n - 1)^s}\sum_{k=1}^n (-1)^{n-k}\left[\begin{matrix} n \\ k \end{matrix}\right]_q q^{\binom{n-k}{2}} (q^k - 1)^s,\end{eqnarray*} where $\left[\begin{matrix} n \\ k \end{matrix}\right]_q$ is the number of $k$-dimensional subspaces of $\mathbb F_q^n$. We recall the explicit expression (see \cite[Chap 1, Sect. 7]{stanley}) $$\left[\begin{matrix} n \\ k \end{matrix}\right]_q = \frac{(1-q^n)(1-q^{n-1})\cdots(1-q^{n-k+1})}{(1-q^k)(1-q^{k-1})\cdots(1-q)},$$ which is a ``$q$-analog'' of the binomial coefficient.

The subspace lattice $\mathcal S(\mathbb F_q^n)$ is a natural extension of the Boolean lattice $\mathcal B_n$, in the sense that many results about $\mathcal S(\mathbb F_q^n)$ degenerate to results about the Boolean lattice $\mathcal B_n$ when $q\to 1$. Thus, in many ways $\mathcal B_n$ plays the role of the subspace lattice of a $n$-dimensional vector space over a field with one element (if one were to exist). The following proposition is yet another instance of this. 

\begin{proposition}
Viewing $P(\mathcal S(\mathbb F_q^n), s)$ as a continuous function of $q$ with a removable singularity at $q=1$, we have  $\lim_{q\to 1} P(\mathcal S(\mathbb F_q^n), s) = P(\mathcal B_n, s)$. 
\end{proposition}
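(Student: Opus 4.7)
The plan is to take the limit $q \to 1$ term-by-term inside the finite sum
$$P(\mathcal S(\mathbb F_q^n), s) = \sum_{k=1}^n (-1)^{n-k}\left[\begin{matrix} n \\ k \end{matrix}\right]_q q^{\binom{n-k}{2}}\cdot \frac{(q^k - 1)^s}{(q^n - 1)^s},$$
showing that each factor in each summand extends continuously through $q=1$ once the apparent $(q-1)$-singularities are cancelled.

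First, I would handle the $q$-binomial coefficient. Using the identity $1 - q^m = (1-q)(1 + q + \cdots + q^{m-1})$, the $k$ copies of $(1-q)$ in the numerator of $\left[\begin{matrix} n \\ k \end{matrix}\right]_q$ cancel with the $k$ copies in the denominator, leaving a rational function of $q$ that is continuous at $q=1$ with value $\binom{n}{k}$. This is the classical statement that the $q$-binomial is a $q$-analog of the ordinary binomial coefficient. Clearly $q^{\binom{n-k}{2}} \to 1$ as $q \to 1$, so this factor contributes $1$ in the limit.

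Next, I would apply the same cancellation to the outer ratio:
$$\frac{q^k - 1}{q^n - 1} = \frac{1 + q + \cdots + q^{k-1}}{1 + q + \cdots + q^{n-1}} \longrightarrow \frac{k}{n} \quad \text{as } q \to 1.$$
Since this quotient is a positive real number for $q$ real and sufficiently close to $1$, the principal branch of the $s$-th power is continuous there, and hence $(q^k-1)^s/(q^n-1)^s \to (k/n)^s$ for every $s \in \mathbb C$.

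Assembling these observations, I obtain
$$\lim_{q\to 1}P(\mathcal S(\mathbb F_q^n), s) = \sum_{k=1}^n (-1)^{n-k}\binom{n}{k}\left(\frac{k}{n}\right)^s = \frac{1}{n^s}\sum_{k=1}^n (-1)^{n-k}\binom{n}{k}k^s,$$
which matches the formula for $P(\mathcal B_n, s)$ derived earlier (taking $r = n$). There is no serious obstacle here; the only mildly subtle point is confirming that the individual factors really have removable singularities at $q=1$, and this is handled by the explicit cancellation of $(q-1)$ factors above.
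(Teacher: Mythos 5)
Your proof is correct and takes essentially the same approach as the paper: both arguments pass the limit $q\to 1$ term-by-term through the finite sum, using $\lim_{q\to 1}(q^k-1)/(q^n-1)=k/n$ and the convergence of the $q$-binomial coefficient to $\binom{n}{k}$. Your explicit cancellation of the $(1-q)$ factors (and the remark about the principal branch of the $s$-th power) merely spells out the details behind the ``well-known identity'' that the paper invokes.
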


\begin{proof}
This immediately follows by the well-known identity $\lim_{q\to 1} \frac{q^i-1}{q^j - 1} = \frac{i}{j}$ which gives $\lim_{q\to 1} \left[\begin{matrix}n \\ k \end{matrix} \right]_q = \displaystyle\binom{n}{k}$. Now, $$\lim_{q\to 1} \frac{1}{(q^n - 1)^s}\sum_{k=1}^n (-1)^{n-k}\left[\begin{matrix} n \\ k \end{matrix}\right]_q q^{\binom{n-k}{2}} (q^k - 1)^s = \frac{(-1)^n}{n^s}\sum_{k=1}^n (-1)^{k}\binom{n}{k}k^s = P(\mathcal B_n, s),$$ as desired. 
\end{proof}

\subsection{Partition Lattice}

Let $\Pi_n$ be the set of all partitions of $\{1, 2, \ldots, n\}$, ordered by refinement. Then, $\Pi_n$ is a lattice (see \cite[Chap. 3, Sect. 10]{stanley}). The join-irreducible elements of $\Pi_n$ are precisely its atoms, i.e. partitions with only one non-trivial part of size $2$. As such, we have $|J| = \binom{n}{2}$, for the atoms of $\Pi_n$ bijectively correspond to the $2$-element subsets of $\{1, 2, \ldots, n\}$. The join-irreducible elements which are at most a partition $P$ are precisely the partitions corresponding to all possible pairs of elements in the parts $P_1, \ldots, P_k$ of $P$, therefore $J_P = \bigsqcup_{i=1}^k \binom{P_i}{2}$. This gives $|J_P| = \sum_{i=1}^k \binom{|P_i|}{2}$. The relevant M\"obius numbers are given in the following proposition from \cite[Chap. 3, Sect. 10]{stanley}. \begin{proposition} For a partition $P\in\Pi_n$, we have $\mu(P, \widehat 1_{\Pi_n})$ is given by $(-1)^{|P|-1}(|P|-1)!$, where $|P|$ is the number of parts of $P$.\end{proposition}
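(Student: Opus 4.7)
The plan is to reduce to the case $P = \widehat{0}_{\Pi_n}$ via an interval isomorphism, extract a recursion from the defining property of $\mu$, and then solve that recursion with an exponential generating function.

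First, I would observe that for any $P \in \Pi_n$, the interval $[P, \widehat{1}_{\Pi_n}]$ is isomorphic to $\Pi_{|P|}$. Indeed, a partition $Q \geq P$ is uniquely determined by the partition of the $|P|$-element set of blocks of $P$ obtained by declaring two blocks of $P$ equivalent whenever they lie in a common part of $Q$; this correspondence preserves refinement. Since M\"obius numbers depend only on the isomorphism type of the interval, this gives $\mu_{\Pi_n}(P, \widehat{1}_{\Pi_n}) = \mu_{\Pi_{|P|}}(\widehat{0}_{\Pi_{|P|}}, \widehat{1}_{\Pi_{|P|}})$. Setting $a_n := \mu(\widehat{0}_{\Pi_n}, \widehat{1}_{\Pi_n})$, it therefore suffices to show $a_n = (-1)^{n-1}(n-1)!$ for every $n \geq 1$.

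Next, applying the defining M\"obius recursion at $y = \widehat{0}_{\Pi_n}$ and $x = \widehat{1}_{\Pi_n}$ for $n \geq 2$ yields $\sum_{P \in \Pi_n} \mu(P, \widehat{1}_{\Pi_n}) = 0$. By the reduction above, $\mu(P, \widehat{1}_{\Pi_n}) = a_{|P|}$, and grouping partitions by their number of blocks (there are $S(n,k)$ partitions of $[n]$ into exactly $k$ blocks), the relation collapses to the recursion $a_1 = 1$ together with $\sum_{k=1}^{n} S(n,k)\, a_k = 0$ for all $n \geq 2$.

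To solve this, I would introduce the exponential generating function $f(y) := \sum_{k \geq 1} a_k \frac{y^k}{k!}$. Multiplying the recursion by $x^n/n!$, summing over $n \geq 1$, and invoking the classical identity $\sum_{n \geq k} S(n,k) \frac{x^n}{n!} = \frac{(e^x - 1)^k}{k!}$ converts the left side to $f(e^x - 1)$ and the right side to $a_1 x = x$. Hence $f(e^x - 1) = x$, so $f(y) = \log(1+y) = \sum_{k \geq 1} \frac{(-1)^{k-1}}{k} y^k$, and reading off coefficients gives $a_n = (-1)^{n-1}(n-1)!$, as desired. The main obstacle I expect is setting up the interval isomorphism cleanly and checking that it is order-preserving; once that is in hand, the recursion and its generating-function solution are routine.
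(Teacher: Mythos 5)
Your proof is correct and complete. Note that the paper offers no proof of this proposition at all: it is quoted from Stanley \cite[Chap. 3, Sect. 10]{stanley}, so your argument is genuinely supplementary rather than a variant of something in the text. Each step checks out: the interval isomorphism $[P, \widehat 1_{\Pi_n}] \cong \Pi_{|P|}$ is order-preserving in both directions (coarsenings of $P$ correspond bijectively to partitions of the set of blocks of $P$); the M\"obius number of an interval is an invariant of its isomorphism type, by induction on interval size using the paper's recursive definition; the recursion $a_1 = 1$, $\sum_{k=1}^n S(n,k)\,a_k = 0$ for $n \ge 2$ is exactly that definition applied at $(\widehat 0, \widehat 1)$; and the formal-power-series step is legitimate because $e^x - 1$ has zero constant term and unit linear coefficient, hence compositional inverse $\log(1+y)$. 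For comparison, the standard textbook proof (essentially the one behind the paper's citation) makes the same reduction to $\mu(\widehat 0, \widehat 1)$ via the interval isomorphism, but finishes with a polynomial identity instead of an EGF: counting functions $f\colon\{1,\dots,n\}\to\{1,\dots,x\}$ by their kernel partition gives $x^{|\pi|} = \sum_{\sigma\ge\pi} x(x-1)\cdots(x-|\sigma|+1)$, M\"obius inversion yields $x(x-1)\cdots(x-n+1) = \sum_{\sigma}\mu(\widehat 0,\sigma)\,x^{|\sigma|}$, and extracting the coefficient of $x^1$ (to which only $\sigma = \widehat 1$ contributes) gives $\mu(\widehat 0,\widehat 1) = (-1)^{n-1}(n-1)!$. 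The two finishes are of comparable length; yours has the pleasant feature of exhibiting the generating function of the $a_k$ as $\log(1+y)$, the compositional inverse of the Stirling-number EGF $e^y - 1$, which resonates with the connection between $\Pi_n$, $\mathcal B_r$, and Stirling numbers that the paper develops in Section 4.
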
 So, we may write $P(\Pi_n, s)$ in the following form: $$P(\Pi_n, s) = \frac{1}{\binom n2^s} \sum_{k=1}^n \sum_{P\in\Pi_n;\text{ } |P| = k} (-1)^{k-1}(k-1)!\left(\sum_{Q\in P} \binom{\lvert Q\rvert}{2} \right)^s.$$

\section{COSET-LIKE BEHAVIOR}

Recall that an \textbf{ordinary Dirichlet series} is a series of the form $\sum_{n=1}^\infty a_n n^{-s}$, where $a_n\in\mathbb C$. This is a special case of a \textbf{general Dirichlet series}, which is a series of the form $\sum_{n=1}^\infty a_ne^{-\lambda_ns},$ where $a_n\in\mathbb C$ and $\lambda_n$ is a strictly increasing divergent sequence of non-negative real numbers.

Notice that in \eqref{4}, the ratio $[J : J_x]$ need not be an integer. Consequently, $P(L, s)$ need not be an ordinary Dirichlet series. In this regard, the obtained expression for $P(L, s)$ is a finite general Dirichlet series with integer coefficients. For example, we have $$P(\Pi_5, s) = 1 - \frac{5}{(5/3)^s} - \frac{10}{(5/2)^s} + \frac{20}{(10/3)^s} + \frac{6}{5^{s-1}} - \frac{6}{10^{s-1}}.$$

Since for a coset $xH$ in the coset lattice $\mathscr C(G)$ of a finite group $G$, the ratio $[J : J_{xH}] = |G:H|$ is always an integer (by Lagrange's theorem), we call lattices with this property coset-like. More precisely:

\begin{definition}
We say a lattice $L$ is \textbf{strongly coset-like} if $|J_x|$ divides $|J|$ for every $x\in L\setminus\{\widehat 0\}$. 
\end{definition}

This divisibility condition may not be necessary for $P(L, s)$ to be a finite ordinary Dirichlet series, for the M\"obius numbers may happen to cancel out in just the right way to eliminate non-integer ratios. Thus:

\begin{definition}
We say a lattice $L$ is \textbf{weakly coset-like} if $P(L, s)$ is a finite ordinary Dirichlet series.
\end{definition}

In principle, it is unclear whether the above two definitions are equivalent. Some structural property of lattices could potentially prevent ``local sums'' of the form $$S(q) := \sum_{\substack{x\in L \\ [J:J_x] = q}} \mu(x, \widehat 1)$$ from vanishing (note that $P(L, s) = \sum_{q\in\mathbb Q} S(q)q^{-s}$). However, this is not the case. A computer search by Håvard Damm-Johnsen \cite{Damm-Johnsen_WCLvsSCL} has revealed that the following lattice on 10 points (given by its Hasse diagram) is a minimal example of a weakly coset-like lattice which is not strongly coset-like.

% https://tikzcd.yichuanshen.de/#N4Igdg9gJgpgziAXAbVABwnAlgFyxMJZAJgBoBWAXVJADcBDAGwFcYkQAdDgdy1gAt6OAAQAGEAF9S6TLnyEUo0gBZqdJq3YBGSdJAZseAkTKqaDFm0QhiumYflFlKtRc3Xyd-bKMKSpLVcNKxAADi8DOWMUMgBmIMt2WIifRxjSYgT3EGUUh2j-USyQrl4BIWEdKXsov1jSePNg9gA2PNqicgbi9gB2STUYKABzeCJQADMAJwgAWyQyEBwIJC6Qfhh6KHZIMDZqkGm51ZplpBaaDa2dgn29I-nEC6WVxGd1ze3rXbvJmcf3mdEL1Lp8bnsvA8kPUXkhQqDrt9bpD-tDTq8AJwIr7gZEHKGIJSwxBabHg36HVGE9ELMlIiH4qlEoEwq44n4o44kmnAum4hn3Knw4kgj6I-kUglYkV8jkSSgSIA
\adjustbox{scale=0.9,center}{\begin{tikzcd}
                          &  & \widehat 1                                                              &                         &                                           &                           \\
                          &  & 8 \arrow[u, no head]                                                    &                         &                                           &                           \\
                          &  & 4 \arrow[u, no head]                                                    &                         &                                           &                           \\
                          &  & 3 \arrow[u, no head]                                                    & 6 \arrow[luuu, no head] &                                           & 7 \arrow[llluuu, no head] \\
1 \arrow[rruuuu, no head] &  & 2 \arrow[u, no head]                                                    &                         & 5 \arrow[lu, no head] \arrow[ru, no head] &                           \\
                          &  & \widehat 0 \arrow[llu, no head] \arrow[u, no head] \arrow[rru, no head] &                         &                                           &                  \end{tikzcd}}

One readily computes that the above lattice has probabilistic zeta function $1-1/2^s - 2/4^s$, but $[J:J_3] = 8/3$. Indeed, one obtains an infinite family of such lattices by simply adjoining $k$ new atoms with $k$ chosen appropriately. This increases the number of join irreducibles to $8+k$ but does not alter the M\"obius numbers of the original lattice. Thus, whenever $k\not\equiv 1\pmod 3$ (so that $8+k$ is not divisible by $3$), we get an example of a weakly coset-like lattice on $10+k$ elements which is not strongly coset-like. This covers the residue classes $0$ and $1$ modulo 3. To get the residue class $2$ modulo 3, we start from the following lattice instead (also obtained by \cite{Damm-Johnsen_WCLvsSCL}).

    % https://tikzcd.yichuanshen.de/#N4Igdg9gJgpgziAXAbVABwnAlgFyxMJZAFgBoBWAXVJADcBDAGwFcYkQAdDgdy1gAt6OAAQAGEAF9S6TLnyEUARlLFqdJq3aLJ0kBmx4CRZQGY1DFm0QgATDpkH5RUaRvmNVkAA57e2YYVkZTcaC01rAE5ffTkjFBtXd0t2Ymj-JyVSRSTwkHI0xzjkBOzQj3YANgLYwJMsnM8AdmqAonIVBvYTFozi0lFO6y5eASFhbQk1GCgAc3giUAAzACcIAFskFxAcCCRlEH4Yeih2SDA2KSXVjcQtnaQImkPj04IL3RX1vZp7xASDo4naxnd5XL5-H67RB1AEvYFvXyfG7-X5kWFA8AIy4gJFIFFQ9ro17nRHXB6QpCE54YkGk8GE34VJ6A4mgnFkxAMqGNZlwzEk7G4zkUxBeXk0rEfDk87ZQxRbamsuk3MWyvYKlnwgVS8FMtWIeXipWCjkw36GkAAIxgYCBxEeRK1bKFaPNW2ttqQFQ1fNpkwkQA
\adjustbox{scale=0.9,center}{\begin{tikzcd}
                                      &                                                              & \widehat 1                                                   &                       &                                                      &                          \\
                                      & 5 \arrow[ru, no head]                                        & 6 \arrow[u, no head]                                         & 7 \arrow[lu, no head] &                                                      &                          \\
8 \arrow[rruu, no head, bend left=49] & 9 \arrow[ruu, no head, bend left=60]                         & 4 \arrow[lu, no head] \arrow[u, no head] \arrow[ru, no head] &                       &                                                      &                          \\
                                      & 2 \arrow[lu, no head] \arrow[u, no head] \arrow[ru, no head] &                                                              &                       &                                                      &                          \\
                                      & 1 \arrow[u, no head]                                         &                                                              &                       &                                                      & 3 \arrow[llluu, no head] \\
                                      &                                                              &                                                              &                       & \widehat 0 \arrow[lllu, no head] \arrow[ru, no head] &            \end{tikzcd}}

This is also a weakly coset-like lattice which is not strongly coset-like, for its probabilistic zeta function is $1-3/2^s + 2/4^s$, yet $[J:J_4] = 8/3$. Following the same process of adding $k$ new atoms, we get an example of a weakly coset-like lattice on $11+k$ elements which is not strong coset-like whenever $k\not\equiv 1\pmod 3$ (such that $8+k$ is not divisible by 3). This covers the residue classes $1$ and $2$ modulo $3$. As such, we have examples of such lattices on any number of vertices above $10$. 

In contrast, the following lemma gives a simple criterion for when we may extend a ``not strongly coset-like'' result to a ``not weakly coset-like'' one.

\begin{lemma}
Let $L$ be a lattice with distinct identities $\widehat 0, \widehat 1$. If there exists $x\in L\setminus\{\widehat 0, \widehat 1\}$ such that $|J_x| \geq |J_y|$ for all $y\in L\setminus\{\widehat 0, \widehat 1\}$ and $|J_x|$ does not divide $|J|$ (i.e. the strongly coset-like condition for $L$ fails at $x$),
then $L$ is not weakly coset-like. \label{lemma}
\end{lemma}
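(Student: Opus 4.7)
The plan is to extract from $P(L, s) = \sum_{q \in \mathbb Q} S(q) q^{-s}$ the coefficient at the smallest ratio $[J:J_y]$ strictly exceeding $1$, and show that it is nonzero while the ratio itself fails to be an integer. By the uniqueness of general Dirichlet series representations, this will prevent $P(L, s)$ from being written as $\sum_n a_n n^{-s}$, which is exactly what it means for $L$ to fail to be weakly coset-like.

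Set $q_{\min} := [J:J_x] = |J|/|J_x|$; by hypothesis this is a non-integer. Because every element of $L$ equals the join of the join-irreducibles below it, the relation $J_y = J$ forces $y = \widehat 1$, so only $\widehat 1$ contributes to $S(1)$. The maximality of $|J_x|$ over $L \setminus \{\widehat 0, \widehat 1\}$ then singles out $q_{\min}$ as the smallest value of $[J:J_y]$ strictly greater than $1$.

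The central step is to pin down which $y$ contribute to $S(q_{\min})$, namely those with $|J_y| = |J_x|$. I would argue each such $y$ must be a coatom: if not, pick $z$ with $y < z < \widehat 1$; from $J_y \subseteq J_z$ and the maximality of $|J_x|$ we obtain $J_y = J_z$, so $y = \bigvee J_y = \bigvee J_z = z$, a contradiction. At least one such coatom exists (either $x$ itself, or a coatom $c$ above $x$, for which the same squeeze yields $|J_c| = |J_x|$). Since $\mu(c, \widehat 1) = -1$ for every coatom, we conclude $S(q_{\min}) = -k$ with $k \geq 1$, which is in particular nonzero.

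Finally, I would invoke uniqueness of general Dirichlet series: if $P(L, s) = \sum_n a_n n^{-s}$, then $\sum_q S(q) q^{-s} - \sum_n a_n n^{-s} \equiv 0$ is a finite general Dirichlet series, and isolating the smallest surviving frequency and letting $s \to \infty$ forces every coefficient to vanish. In particular every $q$ with $S(q) \neq 0$ must be a positive integer, which is violated by $q_{\min}$. The one delicate point is this uniqueness argument, but it is a short standard computation; the rest is bookkeeping driven by the key observation that the smallest nontrivial ratio must come from a coatom, whose M\"obius value is determined.
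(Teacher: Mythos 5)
Your proposal is correct and takes essentially the same route as the paper: you show that every $y$ attaining the maximal value $|J_y|$ must be a coatom (via the squeeze $J_y \subseteq J_z$, $|J_y| = |J_z|$, hence $y = \bigvee J_y = \bigvee J_z = z$), so the coefficient of the non-integer frequency $[J:J_x]$ is a sum of $-1$'s and therefore nonzero. The only difference is that you make explicit the uniqueness argument for general Dirichlet series (isolating the smallest surviving frequency and letting $s \to \infty$), which the paper's proof leaves implicit.
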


\begin{proof}
Firstly, we show that the conditions of the lemma imply that $x$ is a coatom. If $x$ were not maximal, then there would exist $y\in L$ with $\widehat 1 > y > x$. Further, we would have $|J_y|\geq |J_x|\geq |J_y|$, so $|J_x| = |J_y|$. When combined with $J_x\subseteq J_y$, we would get $J_x = J_y$. Then, $x=\bigvee_{j\in J_x} j = \bigvee_{j\in J_y} j = y>x$, a contradiction. 

Since $x$ is a coatom, we have $\mu(x, \widehat 1) = -1$. Furthermore, if $y\in L$ is such that $|J_x|=|J_y|$, then $y$ is a coatom, thus also $\mu(y, \widehat 1) = -1$. It follows that the term $1/[J : J_x]^s$ appears in $P(L, s)$ with a non-zero coefficient, as desired to show that $P(L, s)$ is not an ordinary Dirichlet series. 
\end{proof}

\subsection{Partition Lattice}

The examples computed in the previous chapter show that the divisibility lattice $\mathcal O_n$ (hence also the Boolean lattice $\mathcal B_r$) and the subspace lattice $\mathcal S(\mathbb F_q^n)$ are typically not weakly coset-like, hence also not strongly coset-like. We now examine the partition lattice $\Pi_n$. As a first step, it is easy to prove that $\Pi_n$ is typically not strongly coset-like. This is expected, for we saw in the previous section that the structure of the join irreducibles in the partition lattice is additive in nature.

\begin{proposition}
The partition lattice $\Pi_n$ is strongly coset-like if and only if $n\leq 4$. \label{prop2}
\end{proposition}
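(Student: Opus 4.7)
The plan is to prove both implications separately. For $n \leq 4$, I would verify the strongly coset-like condition directly by enumeration. The cases $n = 1, 2$ are immediate. For $n = 3$, we have $|J| = 3$, and the only values of $|J_P|$ for $P \in \Pi_3 \setminus \{\widehat 0\}$ are $1$ (at the three atoms) and $3$ (at $\widehat 1$), both of which divide $3$. For $n = 4$, $|J| = \binom{4}{2} = 6$, and enumerating the partitions with parts of size at least $2$ gives $|J_P| \in \{1, 2, 3, 6\}$, all of which divide $6$.

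For the reverse direction, I would assume $n \geq 5$ and exhibit a single partition witnessing the failure of the divisibility condition. The natural candidate is the ``maximally unbalanced'' partition
$$P \;=\; \{1, 2, \ldots, n-1\} \mid \{n\},$$
for which $|J_P| = \binom{n-1}{2} + \binom{1}{2} = \binom{n-1}{2}$ while $|J| = \binom{n}{2}$. The key computation is
$$\frac{|J|}{|J_P|} \;=\; \frac{n(n-1)/2}{(n-1)(n-2)/2} \;=\; \frac{n}{n-2},$$
which is an integer if and only if $n-2$ divides $n - (n-2) = 2$, i.e.\ if and only if $n \in \{3, 4\}$. For $n \geq 5$ one has the strict inequality $1 < n/(n-2) < 2$ (the right inequality being equivalent to $n > 4$), so the ratio is not an integer and therefore $|J_P| \nmid |J|$, as required.

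There is no substantive obstacle in this argument; the only real choice is picking the right test partition. A less economical approach via small uniform partitions (for instance one triple plus singletons, giving $|J_P| = 3$, or two disjoint pairs, giving $|J_P| = 2$) would force case analysis on the residues of $n$ modulo several small primes, and in principle would require handling infinitely many ``bad'' residue classes. The ``nearly trivial'' partition with a single block of size $n-1$ sidesteps this entirely, reducing the proof to a one-line arithmetic identity.
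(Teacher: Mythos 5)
Your proof is correct and follows essentially the same route as the paper: direct verification for $n \leq 4$, and for $n \geq 5$ the same witness partition $\{\{1,\ldots,n-1\},\{n\}\}$ with $|J_P| = \binom{n-1}{2}$. The only cosmetic difference is that you show the ratio $\binom{n}{2}/\binom{n-1}{2} = n/(n-2)$ lies strictly between $1$ and $2$, while the paper notes that divisibility would force $\binom{n-1}{2} \mid n-1$, which fails since $\binom{n-1}{2} > n-1$; these are equivalent one-line arguments.
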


\begin{proof}
It is trivial that $\Pi_2, \Pi_3$ are strongly coset-like, as there are no non-trivial elements which are not join-irreducible. For $\Pi_4$, it suffices to check that $\binom{3}{2}=3$ and $\binom{2}{2} + \binom{2}{2}=2$ divide $\binom{4}{2}=6$. 

Suppose now that $n\geq 5$. Consider the partition $P_0 = \{\{1\}, \{2, 3, 4, \ldots, n\}\}$. Then, $|J_{P_0}| = \binom{n-1}{2}$ does not divide $\binom{n}{2}=\binom{n-1}{2} + n-1$, for this is equivalent to $\binom{n-1}{2}\mid n-1$, yet $\binom{n-1}{2}>n-1$ for $n\geq 5$.
\end{proof}

Lemma \ref{lemma} allows us to deduce further that the partition lattice $\Pi_n$ is weakly coset-like if and only if it is strongly coset-like.

\begin{theorem}
The partition lattice $\Pi_n$ is weakly coset-like if and only if $n\leq 4$. 
\end{theorem}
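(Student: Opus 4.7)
The plan is to deduce this theorem directly from Proposition \ref{prop2} and Lemma \ref{lemma}. The ``if'' direction is immediate: if $n\leq 4$, then Proposition \ref{prop2} gives that $\Pi_n$ is strongly coset-like, and by the definitions of the two notions (the Dirichlet series in \eqref{4} then automatically has only integer denominators), strongly coset-like implies weakly coset-like. So the content of the theorem lies in the ``only if'' direction: I need to show $\Pi_n$ is not weakly coset-like whenever $n\geq 5$.

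For that direction I will apply Lemma \ref{lemma} to the partition $x_0 = \{\{1\},\{2,3,\ldots,n\}\}\in\Pi_n$. Note $|J_{x_0}|=\binom{n-1}{2}$, and Proposition \ref{prop2} already established that $\binom{n-1}{2}$ does not divide $\binom{n}{2}=|J|$ when $n\geq 5$. The missing hypothesis of the lemma is the maximality condition: I must verify that $|J_{x_0}|\geq |J_P|$ for every $P\in\Pi_n\setminus\{\widehat 0,\widehat 1\}$.

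The key step, and the only non-trivial one, is therefore this maximality claim. I plan to prove it using the convexity inequality
\[
\binom{a}{2}+\binom{b}{2}\leq \binom{a+b-1}{2}
\]
for all integers $a,b\geq 1$, which follows from the elementary identity $\binom{a+b-1}{2}-\binom{a}{2}-\binom{b}{2}=(a-1)(b-1)$. Iterating this inequality on the parts of an arbitrary partition $P$ with at least two parts, one merges all but one singleton into a single block, increasing $\sum_i\binom{|P_i|}{2}$ at each step; the result is a partition of shape $(n-1,1)$. Hence $|J_P|=\sum_i\binom{|P_i|}{2}\leq\binom{n-1}{2}=|J_{x_0}|$ for every $P\in\Pi_n\setminus\{\widehat 1\}$, as required.

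With the maximality of $|J_{x_0}|$ established, Lemma \ref{lemma} applies and yields that $\Pi_n$ fails to be weakly coset-like for $n\geq 5$, completing the proof. The only step requiring genuine work is the convexity estimate, and even that is a short computation; the rest of the argument is a direct invocation of earlier results. I do not anticipate any hidden obstacle beyond confirming that the maximum of $|J_P|$ over non-trivial, non-top partitions is indeed attained at the ``coatom of shape $(n-1,1)$'' type.
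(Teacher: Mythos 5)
Your proof is correct and takes essentially the same route as the paper's: both deduce the theorem from Proposition \ref{prop2} together with Lemma \ref{lemma} applied to the same witness $P_0=\{\{1\},\{2,3,\ldots,n\}\}$, the only substantive step being the maximality of $|J_{P_0}|$, which the paper proves by coarsening an arbitrary partition to a coatom via supra-additivity of $x\mapsto\binom{x}{2}$ and then maximizing $\binom{x}{2}+\binom{n-x}{2}$ at $x=1$, while you iterate the merge inequality $\binom{a}{2}+\binom{b}{2}\le\binom{a+b-1}{2}$. One harmless imprecision: your merging process actually terminates at shape $(n-k+1,1^{k-1})$ rather than $(n-1,1)$, since the spawned singletons cannot be reabsorbed by your inequality (it gives equality there), but the needed bound $\binom{n-k+1}{2}\le\binom{n-1}{2}$ then follows at once from monotonicity of $\binom{x}{2}$.
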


\begin{proof}
By Proposition \ref{prop2}, it suffices to show that $P_0 = \{\{1\}, \{2, 3, \ldots, n\}\}\in\Pi_n$ satisfies the conditions of Lemma \ref{lemma} for $n\geq 5$.  Let $P = \{P_1, P_2 \ldots, P_k\}$ be any non-trivial (that is, $1<k<n$) partition in $\Pi_n$, set $l = \lfloor k/2\rfloor$ and consider $P' = \{P'_1, P'_2\}$, where $P'_1 = \sqcup_{i=1}^l P_i$ and $P'_2 = \sqcup_{i=l+1}^k P_i$. Thus, $P'$ is a coatom and by supra-additivity of the function $x\mapsto\binom x2$ (i.e. $\binom x2 + \binom y2 < \binom{x+y}2$ and induction), we get $$|J_P| = \sum_{i=1}^l \binom{|P_i|}2  + \sum_{i=l+1}^k \binom{|P_i|}2 \leq \binom{|P'_1|}2 + \binom{|P'_2|}2 \leq \binom{n-1}{2} = |J_{P_0}|.$$ The last inequality follows by the fact that the function $x\mapsto \binom{x}2 + \binom{n-x}2$ is symmetric with respect to $x=n/2$, strictly decreasing on $[1, n/2]$ and strictly increasing on $[n/2, n-1]$, so that its maximum on $[1, n-1]$ is attained at the boundary point $x=1$. Note that the inequality $|J_P|\leq\binom{n-1}2$ is strict when $P$ is not a coatom. 
\end{proof}

\subsection{$d$-Divisible Partition Lattice}

We turn our attention to the $d$-divisible partition lattice $\Pi_{dn}^d$, namely the set of all partitions of $\{1, 2, \ldots, dn\}$ with the property that each part is of cardinality divisible by $d$, ordered by refinement, with the empty set as an artificial bottom element. There was initially more hope for positive results regarding coset-like behavior of $\Pi_{dn}^d$, motivated by the multiplicative nature of the structure of the join irreducibles (see below). $d$-divisible partition lattices are also similar to coset lattices in terms of $EL$-labeling considerations \cite{woodroofe}. Although we unexpectedly obtained negative results, they are more intriguing than the previous results for $\Pi_n$. 

The join-irreducible elements of $\Pi_{dn}^d$ are precisely its atoms, i.e. partitions where all parts are of cardinality exactly equal to $d$. An elementary counting argument shows that for a partition $P$ with part sizes $dp_1, \ldots, dp_k$, we have $|J_P| = \prod_{i=1}^k (dp_i)!/((d!)^{p_i}p_i!)$, where $\sum_{i=1}^k p_i = n$. So, whether $\Pi_{dn}^d$ is a strongly coset-like lattice reduces to the following question about divisibility.

\begin{question}Let $n$ be a positive integer and let $k\in\{1, 2, \ldots, n\}$. Is it true that for any positive integers $p_1, p_2 \ldots, p_k$ such that $\sum_{i=1}^k p_i = n$, the divisibility relation $$\left(\prod_{i=1}^k\frac{(dp_i)!}{p_i!}\right) \Big\vert \left(\frac{(dn)!}{n!}\right)$$ holds?\end{question}

Notice that the multinomial $(\sum x_i)!/\prod x_i!$ being an integer implies both that the numerator of the product divides $(dn)!$ as well as that the denominator of the product divides $n!$. This naturally leads us to believe that the answer is no.

In order to show that $\Pi_{2\cdot 2m}^2$ is not strongly coset-like for any $m\geq 2$, it suffices to show that the partition $$P = \{\{1, 2 \ldots, 2m\}, \{2m+1, 2m+2, \ldots, 4m\}\}$$ fails to fulfill the strong coset-like property. That is, we would like to show that $((2m)!/m!)^2$ does not divide $(4m)!/(2m)!$, i.e. that $\binom{2m}{m}$ does not divide $\binom{4m}{2m}$ for any $m\geq 2$.

\begin{lemma}
Let $m\geq 2$ be an integer. Then, $\binom{2m}{m}$ does not divide $\binom{4m}{2m}$.
\end{lemma}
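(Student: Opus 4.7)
The plan is to show that there is a prime $p$ at which the $p$-adic valuations of $\binom{2m}{m}$ and $\binom{4m}{2m}$ are incompatible with divisibility. Specifically, I aim to produce a prime $p$ in the interval $(4m/3,\,2m]$ and use Legendre's formula to compute that $v_p\bigl(\binom{2m}{m}\bigr)=1$ while $v_p\bigl(\binom{4m}{2m}\bigr)=0$. This already forces $\binom{2m}{m}\nmid\binom{4m}{2m}$.

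The valuation computations are routine. For any such $p$, one has $p>m$, so $v_p(m!)=0$; since $2p>2m$, the only multiple of $p$ in $\{1,\dots,2m\}$ is $p$ itself, giving $v_p((2m)!)=1$, and hence $v_p\bigl(\binom{2m}{m}\bigr)=1$. Since $p>4m/3$, we have $3p>4m$, so the only multiples of $p$ in $\{1,\dots,4m\}$ are $p$ and $2p$, giving $v_p((4m)!)=2$ and $v_p\bigl(\binom{4m}{2m}\bigr)=2-2\cdot 1=0$. Higher powers of $p$ contribute nothing, since $p>4m/3$ implies $p^2>4m$ for $m\geq 3$; the borderline $m=2$ is resolved by noting $p=3$, $p^2=9>8=4m$.

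The main obstacle is establishing the existence of a prime in $(4m/3,\,2m]$, and this is exactly where Nagura's theorem (Theorem~\ref{nagurathm}) enters. Setting $n=\lfloor 4m/3\rfloor+1$, one has $n>4m/3$, while $\tfrac{6n}{5}\leq\tfrac{8m}{5}+\tfrac{6}{5}\leq 2m$ whenever $m\geq 3$, and $n\geq 25$ as soon as $m\geq 18$. Thus, for $m\geq 18$, Nagura's theorem yields a prime in $[n,6n/5]\subseteq(4m/3,2m]$, as required.

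The residual finite range $2\leq m\leq 17$ is then checked by hand: for each such $m$ one exhibits a concrete prime in $(4m/3,2m]$ (for instance, $3,5,7,7,11,11,11,13,13,17,19,19,19,23,29,29$ respectively), or equivalently one directly verifies that $\binom{4m}{2m}/\binom{2m}{m}$ fails to be an integer. I expect the most delicate point of the write-up to be calibrating the Nagura threshold cleanly, since one must simultaneously ensure that $n\geq 25$ and that $6n/5$ does not overshoot $2m$; once this bookkeeping is done, the rest of the argument is a short application of Legendre's formula.
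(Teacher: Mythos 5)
Your proposal is correct and follows essentially the same route as the paper: both arguments produce a prime $p$ with $m < p \leq 2m$ and $3p > 4m$, so that $v_p\bigl(\tbinom{2m}{m}\bigr) = 1$ while $v_p\bigl(\tbinom{4m}{2m}\bigr) = 0$, secure its existence for large $m$ via Nagura's theorem, and dispose of the remaining small $m$ by direct check — the paper calibrates with the interval $[5m/3, 2m)$ and threshold $m \geq 15$, whereas you use the wider interval $(4m/3, 2m]$ and threshold $m \geq 18$. One slip in your finite check: for $m = 10$ the listed prime $13$ lies below $40/3$, and indeed $v_{13}(40!) = 3$ gives $13 \mid \tbinom{40}{20}$, so it does not witness non-divisibility; replacing it by $17$ or $19$ (both in $(40/3, 20]$) fixes this, and the rest of the list is correct.
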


\begin{proof} Let $m\geq 15$. Notice that any prime $p\in (m, 2m)$ divides $(2m)!$ (and consequently $\binom{2m}{m}$) precisely once. Writing $\binom{4m}{2m} = (4m)!/(2m)!^2$, we would like to find a prime $p\in(m, 2m)$ which divides $(4m)!$ precisely twice as many times as $(2m)!$. That is, $p$ should satisfy $3p > 4m$, so that $p$ divides $(4m)!$ precisely twice.

By Theorem \ref{nagurathm}, there exists a prime $p_m\in [5m/3, 2m)$, therefore $p_m$ divides $\binom{2m}m$ but not $\binom{4m}{2m}$, as desired to show that $\binom{2m}m$ does not divide $\binom{4m}{2m}$. The remaining cases $(2\leq m\leq 14)$ follow by a computer check on \texttt{https://www.wolframalpha.com/}.\end{proof}

We remark that a trick similar to the above is used in the proof of Bertrand's postulate found in \cite[Chap. 2]{PfTB}. As a corollary, we obtain the following.

\begin{proposition}
The $2$-divisible partition lattice $\Pi_{2n}^2$ is strongly coset-like if and only if $n<4$ or $n=5$.
\end{proposition}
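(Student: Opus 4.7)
The plan is to translate the strong coset-like condition into a family of multinomial divisibility relations and then split on the parity of $n$. For a partition $P$ of $\{1,\ldots,2n\}$ with block sizes $(2p_1,\ldots,2p_k)$, the powers of $2$ in $|J_P|$ and $|J|$ cancel exactly, so the divisibility $|J_P|\mid|J|$ is equivalent to $\binom{n}{p_1,\ldots,p_k}\mid\binom{2n}{2p_1,\ldots,2p_k}$. For $n\in\{1,2,3,5\}$ I would verify this by directly enumerating the (very few) admissible partitions; this establishes the ``if'' direction. For the ``only if'' direction with $n$ even, say $n=2m$ with $m\geq 2$, I would take the balanced composition $(m,m)$, so that the ratio simplifies to $\binom{4m}{2m}/\binom{2m}{m}$ and the non-integrality is exactly the content of the preceding lemma.

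The remaining task is to handle odd $n=2m+1\geq 7$, i.e.\ $m\geq 3$. Here I plan to use the near-balanced composition $(m,m+1)$, which yields ratio $\binom{4m+2}{2m}/\binom{2m+1}{m}$, and to show non-integrality by locating an odd prime $p$ with $(4m+2)/3<p\leq 2m-1$. For such a prime, Legendre's formula gives $v_p\!\left(\binom{2m+1}{m}\right)=1$ while $v_p\!\left(\binom{4m+2}{2m}\right)=0$, which settles the non-divisibility. Once $m$ is large enough (around $m\geq 19$), Nagura's Theorem \ref{nagurathm} supplies the prime; the intermediate values of $m$ are handled by explicit verification that a prime exists in the stated interval.

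The hard part will be the single exceptional value $m=5$ (that is, $n=11$), where the interval $(22/3,9]$ contains no prime and the above argument breaks down. My fallback there is to use the small prime $p=3$ and a direct Legendre computation: one finds $v_3(\binom{11}{5})=1>0=v_3(\binom{22}{10})$, which already witnesses the required non-divisibility and closes this final case.
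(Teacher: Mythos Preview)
Your proposal is correct and follows essentially the same route as the paper: both reduce to the near-balanced two-block partition, obtain the same divisibility (your $\binom{2m+1}{m}\mid\binom{4m+2}{2m}$ is algebraically identical to the paper's $(2m+1)\binom{2m}{m}\mid(4m+1)\binom{4m}{2m}$), and then locate a prime just below $2m$ via Nagura, leaving finitely many small $m$ to check. The only cosmetic differences are that you state the general multinomial reformulation up front and you dispatch the exceptional case $m=5$ by hand with $p=3$, whereas the paper absorbs all $m\le 14$ into a computer verification.
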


\begin{proof}
It is straightforward to check by hand that $\Pi_{2n}^2$ is strongly coset-like if $n<4$ or $n=5$. 

If $n\geq 4$ and $n$ is even, the previous lemma gives that $\Pi_{2n}^2$ is not strongly coset-like. Assume now that $n>5$ is odd and write it as $n=2m+1$ for some $m\geq 3$. Consider the partition $P = \{\{1, 2, \ldots, 2m \}, \{2m+1, 2m+2, \ldots, 4m+2\}\}\in\Pi_{2\cdot(2m+1)}^2$. Then, $|J_P|\mid |J|$ is equivalent to $(2m)!/m! \cdot (2m+2)!/(m+1)! \mid (4m+2)!/(2m+1)!$ or equivalently, \begin{equation}(2m+1)\binom{2m}{m}\text{ }\Big\vert\text{ } (4m+1)\binom{4m}{2m}.\label{eqnruss}\end{equation} Any prime $p_m\in[5m/3, 2m)$ lies strictly between $(4m+1)/3$ and $(4m+1)/2$, thus does not divide $4m+1$. Therefore, for $m\geq 15$, we are done by the proof of the previous lemma. The remaining cases follow by a computer check on  \texttt{https://www.wolframalpha.com/}.
\end{proof}

For $d>2$, we present an asymptotic result, for which we shall utilize the prime number theorem.

\begin{theorem}
For any integer $d\geq 2$, there exists $N = N(d)$ such that $\Pi_{dn}^d$ is not strongly coset-like for any $n\geq N$. \label{mainthm}
\end{theorem}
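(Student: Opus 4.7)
The plan is to exhibit, for each $n$ sufficiently large in terms of $d$, an explicit partition $P\in\Pi_{dn}^d$ violating the strongly coset-like property. Let $m=\lfloor n/2\rfloor$ and take $P$ to have two parts, of sizes $dm$ and $d(n-m)$. Using the explicit formula for $|J_P|$ recalled in the statement preceding the theorem, a direct calculation yields
\[
\frac{|J|}{|J_P|} = \frac{\binom{dn}{dm}}{\binom{n}{m}},
\]
so the goal reduces to showing this rational number fails to be an integer for $n$ large. I will produce a prime $p$ whose $p$-adic valuation is strictly smaller on the numerator than on the denominator.

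The key step is the prime number theorem (Theorem \ref{pnt}) applied with $\varepsilon = 1/(2d-2)$. For $n$ large this furnishes a prime $p$ in the interval $\bigl(2dm/(2d-1),\, dm/(d-1)\bigr]$ when $n=2m$, and in the slightly tightened analog $\bigl(d(2m+1)/(2d-1),\, dm/(d-1)\bigr]$ when $n=2m+1$. A short check shows that in either case $\lceil n/2\rceil < p \leq n$, and in particular $p>m+1$ in the odd case.

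For such a prime, one has $v_p\bigl(\binom{n}{m}\bigr) = 1$ by Legendre's formula, since $p$ occurs exactly once in $n!$ and not at all in $m!$ or $(n-m)!$. For the numerator, I invoke Kummer's theorem: $v_p\bigl(\binom{dn}{dm}\bigr)$ equals the number of carries in computing $dm + d(n-m)$ in base $p$. The upper bound $p \leq dm/(d-1)$, together with $p>m$, forces $dm$ to have the two-digit base-$p$ expansion $(d-1)p + a_0$, and the lower bound on $p$ forces $a_0$ small enough, specifically $2a_0 < p$ in the even case and $2a_0 + d < p$ in the odd case, to prevent any carry at the units digit; the carry at the $p$'s digit, $2(d-1)<p$, is automatic for $n$ large. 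Thus $v_p\bigl(\binom{dn}{dm}\bigr) = 0$, so $|J|/|J_P|$ has $p$-adic valuation $-1$ and cannot be an integer.

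The main subtlety I foresee is the odd-$n$ case, where the extra summand $d$ at the units digit of $d(m+1)=dm+d$ threatens to create a carry. This is what forces the slight tightening of the lower endpoint of the prime interval from $2dm/(2d-1)$ to $d(2m+1)/(2d-1)$, but the ratio of endpoints still tends to $1+1/(2d-2)$ as $n\to\infty$, so the prime number theorem remains applicable for all sufficiently large $n$.
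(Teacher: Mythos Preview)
Your argument is correct and follows the same overall strategy as the paper: choose the two-block partition with parts of sizes $dm$ and $d(n-m)$, then use the prime number theorem to locate a prime $p$ with $v_p\binom{n}{m}=1$ and $v_p\binom{dn}{dm}=0$.

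The execution differs in a few pleasant ways. The paper places $p$ in the interval $\bigl(\tfrac{2dm}{2\delta(d)+1/2},\tfrac{dm}{\delta(d)}\bigr)$ with $\delta(d)=\lceil d/2\rceil$, so that $\lfloor dm/p\rfloor=\delta(d)$; it then argues directly with Legendre's formula and, for the odd case, checks separately that $p\nmid\prod_{s=1}^{d-1}(2dm+s)$. You instead take $p$ in $\bigl(\tfrac{2dm}{2d-1},\tfrac{dm}{d-1}\bigr]$ (tightened to $\bigl(\tfrac{d(2m+1)}{2d-1},\tfrac{dm}{d-1}\bigr]$ for odd $n$), forcing $\lfloor dm/p\rfloor=d-1$, and then invoke Kummer's theorem to read off the carry count directly. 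Your interval is independent of the parity of $d$, and your treatment absorbs the case $d=2$ uniformly rather than appealing to the earlier proposition and Nagura's bound. The trade-off is that your interval has ratio $1+\tfrac{1}{2d-2}$, while the paper's has the slightly larger ratio $1+\tfrac{1}{4\delta(d)}$, so the paper's version of the prime number theorem kicks in a bit sooner; but since the statement only asks for eventual failure, this makes no difference.
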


\begin{proof}
For $d=2$, we may take $N = 4$ (this is the content of the previous proposition). Now, assume that $d>2$ and firstly, let $n=2m$ for some $m$. Consider the partition $$P = \{\{1, 2, \ldots, dm\}, \{dm+1, dm+2, \ldots, 2dm\}\}\in\Pi_{2dm}^d.$$ The divisibility condition $|J_P|\mid |J|$ is equivalent to $$\left(\frac{(dm)!}{m!} \right)^2\text{ }\Big\vert\text{ } \frac{(2dm)!}{(2m)!}\text{, i.e. } \binom{2m}{m}\text{ }\Big\vert\text{ }\binom{2dm}{dm}.$$ We follow the same strategy as before: we would like to find a prime $p\in (m, 2m)$ (so that $p$ divides $\binom{2m}m$ precisely once) that does not divide $\binom{2dm}{dm}$. Let $\delta(d) = d/2$ if $d$ is even and $\delta(d) = (d+1)/2$ if $d$ is odd. Writing $\binom{2dm}{dm} = (2dm)!/(dm)!^2$, we would like to find a prime $p$ which divides $(dm)!$ with multiplicity $\delta(d)$ and divides $(2dm)!$ precisely twice as many times as $(dm)!$. Choose $N_1$ large enough so that primes $p>m\geq N_1/2$ satisfy $p^2 > 2dm$. Then, it suffices to find $p$ satisfying $$(2\delta(d) + 1/2)p > 2dm> 2\delta(d)p \text{ and } (\delta(d)+1/2)p>dm > \delta(d)p.$$ Rearranging gives $$\frac{2dm}{2\delta(d) + 1/2}< p < \frac{dm}{\delta(d)}.$$ Choosing $\varepsilon>0$ sufficiently small (i.e. $\varepsilon < \frac{2\delta(d) +1/2}{2\delta(d)} - 1$), we may, by Theorem \ref{pnt}, take $N(\varepsilon)\geq N_1$ such that there exists a prime $p_m$ in the desired interval for all $m\geq N(\varepsilon)$.

Suppose now that $n=2m+1$ for some $m$. Consider the partition $$Q = \{\{1, 2, \ldots, dm\}, \{dm+1, dm+2, \ldots, 2dm+d\}\}\in\Pi_{2dm+d}^d.$$ The divisibility relation $|J_Q|\mid|J|$ is equivalent to $$\frac{(dm)!}{m!}\cdot\frac{(dm+d)!}{(m+1)!}\text{ }\Big\vert\text{ }\frac{(2dm+d)!}{(2m+1)!}\text{, i.e. } \binom{2m}{m}\cdot\prod_{s=1}^{d-1} (dm+s)\text{ }\Big\vert\text{ }\binom{2dm}{dm}\cdot\prod_{s=1}^{d-1} (2dm+s).$$ It suffices to show that the same choice of $p_m$ as in the previous case does not divide the product $\prod_{s=1}^{d-1}(2dm+s)$. We have $$\frac{2dm+1}{\frac{dm}{\delta(d)}} < \frac{2dm+s}{p_m} < \frac{2dm+d-1}{\frac{2dm}{2\delta(d)+1/2}}$$ for every $s\in\{1, 2, \ldots, d-1\}$. As $m$ gets large, the left-hand side is arbitrarily close to and greater than $2\delta(d)$, whereas the right-hand side is arbitrarily close to $2\delta(d)+\frac{1}{2}$. This shows that for large enough $m$, $(2dm+s)/p_m$ is not an integer for any $s\in\{1, 2, \ldots, d-1\}$, thus $p_m$ does not divide $\prod_{s=1}^{d-1} (2dm+s)$. This concludes the proof.
\end{proof}

Note that unlike before for $\Pi_n$, Lemma \ref{lemma} cannot be used to extend Theorem \ref{mainthm} to a ``not weakly coset-like'' result, for $P_0 = \{\{1, 2, \ldots, dm\}, \{dm+1, dm+2, \ldots, 2dm\}\}\in\Pi_{2dm}^d$ is not where the maximum of $\lvert J_P\rvert$ is achieved. If we consider $$P_0' = \{\{1, 2, \ldots, d\}, \{d+1, d+2, \ldots, dn\}\}\in\Pi_{dn}^d$$ instead (because this is typically where the maximum is achieved), the corresponding divisibility condition is $$d!\cdot\frac{(d(n-1))!}{(n-1)!}\text{ }\Big\vert\text{ }\frac{(dn)!}{n!}.$$ This reduces to $(d-1)!\mid (dn-1)\cdots (dn-d+1)$. As the product of $d-1$ consecutive integers is divisible by $(d-1)!$, the latter divisibility does in fact hold.

\section{Products}

We recall that two groups $G$ and $H$ are \textbf{coprime} if no proper subgroup of $G\times H$ surjects onto both factors. Brown \cite{brown} proved that for coprime finite groups $G$ and $H$, the relation $P(G\times H, s) = P(G, s)P(H, s)$ holds. On the level of lattices, the identity is $$P(\mathscr C(G\times H), s+1) = P(\mathscr C(G), s+1)P(\mathscr C(H), s+1).$$ Thus, we are naturally led to ask: what is the appropriate product $\star$ on lattices for which $P(L\star K, s) = P(L, s)P(K, s)$?

We start with stating the following multiplicativity result involving the M\"obius numbers from \cite[Chap. 3, Sect. 8]{stanley}.

\begin{proposition}
Let $P$ and $Q$ be posets and let $\mu_P, \mu_Q$ and $\mu_{P\times Q}$ be the M\"obius functions of $P, Q$ and $P\times Q$, respectively. Then, for any $(x, y), (z, w)\in$ $P\times Q$, we have $$\mu_{P\times Q}((x, y), (z, w)) = \mu_P(x, z)\mu_Q(y, w).$$ In particular, if $P, Q$ are lattices, then $\mu_{P\times Q}((x, y), (\widehat 1, \widehat 1)) = \mu_P(x, \widehat 1)\mu_Q(y, \widehat 1)$. \label{mobius} \end{proposition}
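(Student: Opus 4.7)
The plan is to verify that the candidate function $((x,y),(z,w)) \mapsto \mu_P(x,z)\mu_Q(y,w)$ satisfies the defining recursion of the Möbius function on $P \times Q$, and then appeal to uniqueness. Recall from Section~1 that on any finite poset, $\mu$ is characterized by $\mu(a,a) = 1$ together with the recursive relation $\mu(a,b) = -\sum_{a < c \leq b}\mu(c,b)$ for $a < b$; equivalently, $\sum_{a \leq c \leq b}\mu(c,b) = 0$ whenever $a < b$. Since these conditions determine $\mu$ uniquely on the whole poset, it suffices to produce \emph{any} function on pairs satisfying them.

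The decisive structural input is that intervals in the product poset factor: the interval $[(x,y),(z,w)]$ in $P \times Q$ coincides (as a poset under the coordinatewise order) with the cartesian product $[x,z]_P \times [y,w]_Q$. Consequently, any sum of a function over $[(x,y),(z,w)]$ whose integrand splits as a product of a function in the first coordinate and a function in the second coordinate itself splits as a product of two one-variable sums.

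First I would set $\nu((x,y),(z,w)) := \mu_P(x,z)\mu_Q(y,w)$ and check the base case $\nu((x,y),(x,y)) = \mu_P(x,x)\mu_Q(y,y) = 1$. Next, for $(x,y) < (z,w)$, the interval-factorization observation lets me rewrite
$$\sum_{(x,y) \leq (c,d) \leq (z,w)} \nu((c,d),(z,w)) = \Bigl(\sum_{x \leq c \leq z}\mu_P(c,z)\Bigr)\Bigl(\sum_{y \leq d \leq w}\mu_Q(d,w)\Bigr).$$
Since $(x,y) < (z,w)$ forces $x < z$ or $y < w$, at least one of the two factors vanishes by the defining property of $\mu_P$ or $\mu_Q$. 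Hence the full sum is zero, $\nu$ satisfies the same recursion as $\mu_{P \times Q}$, and uniqueness gives $\nu = \mu_{P \times Q}$. The final sentence of the proposition is then obtained by specializing to $(z,w) = (\widehat{1},\widehat{1})$ in the lattice case.

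The argument has no real obstacle; the one point deserving attention is the careful identification $[(x,y),(z,w)] = [x,z]_P \times [y,w]_Q$, because it is precisely this interval factorization that converts the two-dimensional Möbius sum into a product of one-dimensional sums and makes the proof collapse to a direct verification.
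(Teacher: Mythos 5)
Your proof is correct. Note that the paper does not prove this proposition at all — it is quoted from Stanley's \emph{Enumerative Combinatorics} \cite[Chap.~3, Sect.~8]{stanley} — so there is no internal argument to compare against; your verification (the product function satisfies the defining recursion because the interval $[(x,y),(z,w)]$ factors as $[x,z]\times[y,w]$, making the Möbius sum split into a product of sums, at least one of which vanishes, followed by uniqueness) is precisely the standard proof given in that cited source, with the key interval-factorization step correctly identified and justified.
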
 It is therefore natural to ask for the same multiplicative behavior in the join-irreducible ratios, i.e. that $|J_{(x, y)}| = |J_x|\cdot|J_y|$. It is immediate that the Cartesian product does not work, since the element $(x, y)$ need not be join-irreducible if $x, y$ are, for $(x, y)$ may be written as a non-trivial join as $(x, \widehat 0)\vee (\widehat 0, y)$.

\subsection*{Lower Reduced Product}

We consider the lower reduced product $\star$ of lattices, defined as $L\star K := (L\setminus\{\widehat 0\})\times(K\setminus\{\widehat 0\})\cup\{\widehat 0\}$, with componentwise meet and join. This is a natural product to consider, because for groups $G, H$ for which all subgroups of $G\times H$ may be factored as $S\times T$ for some $S\leq G$ and some $T\leq H$, the coset lattices satisfy $\mathscr C(G\times H) = \mathscr C(G)\star\mathscr C(H)$. This happens, for instance, when $G$ and $H$ have coprime orders (see \cite[Chap. 2, Sect. 4]{suzuki}). It is worth noting that Brown \cite{brown} made a more general related observation involving homotopy equivalence. Adding a mild assumption for one of the lattices, we get the following result.

\begin{proposition}
Let $L$ and $K$ be lattices, where $L$ is an atomistic lattice. Then, we have $$P(L\star K, s) = P(L, s)P(K, s).$$
\end{proposition}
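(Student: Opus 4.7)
The plan is to compute $P(L\star K, s)$ directly from the defining formula \eqref{4} and show that both the numerator (the relevant M\"obius numbers) and the denominator (the index ratios $[J:J_{(x,y)}]^s$) factor multiplicatively across the product.

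First I would identify the join-irreducibles of $L\star K$. For any $(x,y)\in(L\star K)\setminus\{\widehat 0\}$, one has $x,y>\widehat 0$ in their respective lattices. If $x$ is not join-irreducible in $L$, say $x=x_1\vee x_2$ non-trivially, then $(x,y)=(x_1,y)\vee(x_2,y)$ is a non-trivial join in $L\star K$, so $(x,y)$ is not join-irreducible; the symmetric statement holds for $y$. Conversely, suppose $x$ is an atom of $L$ and $y$ is join-irreducible in $K$. Any expression $(x,y)=(x_1,y_1)\vee(x_2,y_2)$ with both factors in $L\star K\setminus\{\widehat 0\}$ forces $x_1\vee x_2=x$ and $y_1\vee y_2=y$; since $x$ is an atom and each $x_i>\widehat 0_L$ in $L$, we get $x_1=x_2=x$, while join-irreducibility of $y$ forces $y_1=y$ or $y_2=y$, making one of the factors equal to $(x,y)$. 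Hence $(x,y)$ is join-irreducible. Now the atomistic hypothesis on $L$ ensures every join-irreducible of $L$ is an atom, so the criterion collapses to: $(x,y)\in J(L\star K)$ if and only if $x\in J(L)$ and $y\in J(K)$. In particular $J(L\star K)=J(L)\times J(K)$, and for any $(x,y)>\widehat 0$, the set $J_{(x,y)}=J_x^L\times J_y^K$, giving
\[
[J(L\star K):J_{(x,y)}]=[J(L):J_x^L]\cdot[J(K):J_y^K].
\]

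Next, I would handle the M\"obius numbers. Since removing $\widehat 0$ leaves the interval $[(x,y),(\widehat 1,\widehat 1)]$ of $L\star K$ identical with the corresponding interval in the product poset $(L\setminus\{\widehat 0\})\times(K\setminus\{\widehat 0\})$, Proposition \ref{mobius} applied to these posets gives
\[
\mu_{L\star K}\bigl((x,y),(\widehat 1,\widehat 1)\bigr)=\mu_L(x,\widehat 1_L)\cdot\mu_K(y,\widehat 1_K).
\]

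Finally, plugging everything into \eqref{4}, the sum over $(L\star K)\setminus\{\widehat 0\}=(L\setminus\{\widehat 0\})\times(K\setminus\{\widehat 0\})$ factors as a product of sums:
\[
P(L\star K,s)=\sum_{\substack{x\in L\setminus\{\widehat 0\}\\ y\in K\setminus\{\widehat 0\}}}\frac{\mu_L(x,\widehat 1_L)\mu_K(y,\widehat 1_K)}{[J(L):J_x^L]^s\,[J(K):J_y^K]^s}=P(L,s)P(K,s).
\]

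The main obstacle is the first step: correctly identifying the join-irreducibles of $L\star K$ and seeing where the atomistic hypothesis is essential. Without it, a join-irreducible $x\in L$ that is not an atom produces elements $(x,y)$ which are \emph{not} join-irreducible in $L\star K$ whenever $y$ also fails to be an atom of $K$, so the equality $J(L\star K)=J(L)\times J(K)$ can fail and the denominators no longer factor cleanly. The M\"obius step and the final factorization are then routine.
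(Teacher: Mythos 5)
Your proof is correct and follows essentially the same route as the paper's: you characterize the join-irreducibles of $L\star K$ as $J(L)\times J(K)$, using the atomistic hypothesis on $L$ exactly where the paper does (to force the first coordinates of any nonzero join decomposition to equal the atom $x$), then invoke Proposition \ref{mobius} on the product poset $(L\setminus\{\widehat 0\})\times(K\setminus\{\widehat 0\})$ for the M\"obius numbers, and factor the sum. Your closing remark about why the hypothesis is needed is also accurate, since for non-atoms $x\in J(L)$ and $y\in J(K)$ one has the non-trivial decomposition $(x,y)=(x,y')\vee(x',y)$.
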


\begin{proof}
By Proposition \ref{mobius}, the M\"obius function remains multiplicative over the lower reduced product. That is, $\mu((x, y), (\widehat 1, \widehat 1)) = \mu(x, \widehat 1)\mu(y, \widehat 1)$, as in $P(L\star K, s)$, the M\"obius numbers are taken over the poset $(L\star K)\setminus\{\widehat 0\} = (L\setminus\{\widehat 0\})\times(K\setminus\{\widehat 0\})$, a Cartesian product of posets.

Next, we show that $J_{(x, y)} = J_x\times J_y$. If $(x, y)\in L\star K$ is join-irreducible, then for $x = a\vee c$ and $y = b\vee d$, we have $(x, y) = (a, b)\vee(c, d)$. This means that $(a, b) = (x, y)$ or $(c, d) = (x, y)$, as desired to show that both $x, y$ are join-irreducible. 

Conversely, if $x$ is join-irreducible in $L$ and $y$ is join-irreducible in $K$, then for $$(x, y) = (a, b)\vee (c, d) = (a\vee c, b\vee d),$$ since $x$ is an atom, without loss of generality, we may take $a=x$. If $c=\widehat 0$, this forces $d=\widehat 0$ due to the definition of $\star$, so $b=y$, i.e. $(a, b) = (x, y)$. So, assume that $c=x$ also. If $b\neq y$, join-irreducibility of $y$ implies that $d=y$, so that $(c, d) = (x, y)$, as desired to show that $(x, y)$ is join-irreducible. We finish the proof with a simple computation. We have 

\begin{eqnarray*}
P(L\star K, s) & = & \sum_{\widehat 0 < (x, y)\in L\star K} \frac{\mu((x, y), (\widehat 1,\widehat 1))}{[J(L\star K):J_{(x, y)}]^s} \\ & = & \sum_{\widehat 0 < x\in L}\sum_{\widehat 0 < y\in K} \frac{\mu(x, \widehat 1)\cdot\mu(y,\widehat 1)}{[J(L):J_{x}]^s\cdot[J(K) : J_y]^s} \\ & = & \left(\sum_{\widehat 0 < x\in L} \frac{\mu(x, \widehat 1)}{[J(L) : J_x]^s} \right)\left(\sum_{\widehat 0 < y\in K} \frac{\mu(y, \widehat 1)}{[J(K) : J_y]^s} \right) \\ \\ & = & P(L, s)P(K, s),
\end{eqnarray*} as we desired to show. 
\end{proof}

\section{Questions} We now present some problems that remain open and are possible interesting avenues for further research.
    
    One might be interested in obtaining an alternative (e.g. purely lattice-theoretic) characterization of coset-like lattices. The most naive question is whether a lattice is (strongly) coset-like if and only if it is a coset lattice. In this regard, note that the partititon lattices $\Pi_2$ and $\Pi_3$ are both isomorphic to coset lattices, namely the coset lattices of the trivial group and $\mathbb Z/3\mathbb Z$, respectively. However, $\Pi_4$ is not isomorphic to the coset lattice of any group. For if $\Pi_4$ would be isomorphic to $\mathscr C(G)$ for some group $G$, the group would be of order $6$ because $\mathscr C(G)$ has precisely $|G|$ join-irreducible elements and $\Pi_4$ has $6$ join-irreducible elements. So, we would either have $G\cong \mathbb Z/6\mathbb Z$ or $G\cong S_3$ (the symmetric group on 3 points) but $\mathscr C(\mathbb Z/6\mathbb Z)$ consists of $13$ elements and $\mathscr C(S_3)$ consists of $19$ elements. In contrast, $\Pi_4$ has $15$ elements, so it is not isomorphic to either. A less naive but vaguer question is then: 
    
    \begin{question}
    May every (strongly) coset-like lattice be embedded in a coset lattice in a way that is compatible with the ingredients of the definition of the probabilistic zeta function of a lattice? 
    \end{question}
    
    Note that merely being a sublattice is not instructive, for the subgroup lattice of every group is a sublattice of the coset lattice, yet we saw that the subgroup lattice of $\mathbb Z/n\mathbb Z$ (i.e. the divisibility lattice $\mathcal O_n$) is certainly not strongly coset-like. Because the coset lattice $\mathscr C(G)$ of a group has a natural $G$-action associated with it given by $(xH)^g = (gx)H$, one might expect better behavior from sublattices which preserve the group action through a subgroup. More precisely, if $L$ is a lattice and $G$ is a group which acts on $L$, we may consider sublattices $K$ of $L$ for which there exists some subgroup $H\leq G$ which acts on $K$ via the restricted action. It turns out, however, that the structure of join-irreducible elements of such sublattices may still significantly differ from the structure of the original lattice. For example, the sublattice of $\mathscr C(\mathbb Z/6\mathbb Z)$ generated by $\{0\}, \{3\}$ and $\{1, 4\}$ is not a weakly coset-like lattice, although the subgroup $\{0, 3\}\leq\mathbb Z/6\mathbb Z$ acts on it via the restricted action of $\mathbb Z/6\mathbb Z$ on $\mathscr C(\mathbb Z/6\mathbb Z)$. Nonetheless, we regard compatibility with the group action as natural and therefore require it. Additionally, we require that the structure of join-irreducible elements is preserved when passing to a sublattice. One possible model of appropriate sublattices is the following. \begin{definition}
Let $G$ be a finite group. We say a sublattice $L$ of the coset lattice $\mathscr C(G)$ is a \textbf{good} sublattice, if all of the following hold: \begin{itemize}
    \item[(i)] there exists a normal subgroup $H\trianglelefteq G$ such that $H$ acts on $L$ via the restricted action of $G$ on $\mathscr C(G)$,
    
    \item[(ii)] the set $J(L)$ of join-irreducible elements of $L$ is a subset of the set of join-irreducible elements of $\mathscr C(G)$ (and therefore corresponds to a subset of $G$), and
    
    \item[(iii)] no three elements in $J(L)$ correspond to group elements that belong to three distinct cosets of $H$ in $G$. 
\end{itemize}
\end{definition}

Note that whole coset lattice is trivially a good sublattice of itself. We show that the above definition guarantees that a good sublattice of $\mathscr C(G)$ is strongly coset-like.

\begin{proposition}
Let $G$ be a finite group. Then, any good sublattice of $\mathscr C(G)$ is strongly coset-like.
\end{proposition}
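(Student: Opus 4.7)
My plan is to translate everything into coset arithmetic inside $G$. Let $L$ be a good sublattice of $\mathscr C(G)$ with associated normal subgroup $H \trianglelefteq G$, and set $S := \{g \in G : \{g\} \in J(L)\}$, well-defined by condition (ii). Every $x \in L \setminus \{\widehat 0\}$ is a coset $cK$ for some subgroup $K \leq G$ and some $c \in G$, and the join-irreducibles of $\mathscr C(G)$ are exactly the singletons, so $J_x$ corresponds bijectively to $S \cap cK$. The claim $|J_x| \mid |J(L)|$ therefore reduces to showing $|S \cap cK|$ divides $|S|$.

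The first key step is to pin down the shape of $S$. By condition (i), $H$ acts on $L$ via left multiplication, and this action is by lattice automorphisms of $\mathscr C(G)$, so it preserves $J(L)$. Since $h \cdot \{g\} = \{hg\}$, the set $S$ is closed under left multiplication by $H$, hence is a union of cosets of $H$ (left and right agree by normality). Combined with condition (iii), which forces $S$ to meet at most two such cosets, this yields $S = aH$ or $S = aH \sqcup bH$, and in particular $|S| \in \{|H|, 2|H|\}$.

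Next, I decompose $J_x$ via $S \cap cK = \bigsqcup_i (a_i H \cap cK)$ over the one or two cosets $a_i H$ comprising $S$. Using the standard fact that $aH \cap cK$ is either empty or a single coset of $H \cap K$ (picking any $y$ in the intersection gives $aH \cap cK = yH \cap yK = y(H \cap K)$), each non-empty piece has size exactly $|H \cap K|$. Hence $|J_x| = r \cdot |H \cap K|$ for some $r \in \{0, 1, 2\}$ counting how many of the $a_i H$ actually meet $cK$. Because $x \neq \widehat 0$, the element $x$ is the join of $J_x$ in $L$, so $J_x$ is non-empty and $r \geq 1$.

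The conclusion is then immediate: if $|S| = |H|$, only one coset is available, so $r = 1$ and $|J_x| = |H \cap K|$ divides $|H| = |S|$ by Lagrange's theorem; if $|S| = 2|H|$, then $|J_x| \in \{|H \cap K|, 2|H \cap K|\}$, both of which divide $2|H| = |S|$ since $|H \cap K| \mid |H|$. The only non-routine point, and hence the main obstacle, is the structural observation that conditions (i) and (iii) together pin $J(L)$ down to exactly one or two complete $H$-cosets; once that rigidity is in place, the divisibility is a transparent consequence of coset counting.
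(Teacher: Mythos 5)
Your proof is correct and follows essentially the same route as the paper's: use condition (i) (via normality) and condition (iii) to show $J(L)$ is a union of one or two full cosets of $H$, observe that each $xK \cap aH$ is empty or a coset of $H \cap K$, and conclude by Lagrange-type divisibility of $|H \cap K|$ into $|H|$ or $2|H|$. Your writeup is in fact slightly more careful on two minor points the paper glosses over (that the $H$-action preserves join-irreducibility because it acts by lattice automorphisms, and that $J_x \neq \emptyset$ so $r \geq 1$), but the argument is the same.
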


\begin{proof}
Let $L$ be a good sublattice of $\mathscr C(G)$ and let $H\trianglelefteq G$ be the normal subgroup that acts on it. Notice that the first clause of the definition of a good sublattice implies that if some coset $xK$ of some subgroup $K$ of $G$ is in $L$, then so is $(hx)K$ for every $h\in H$. In particular, if $\{g\}\in J(L)$ for some $g\in G$, then $\{hg\}\in J(L)$ for all $h\in H$.

Since $L$ is a good sublattice, $J(L)$ is the union of at most two cosets of $H$, say, $Hg_1 = g_1H$ and $Hg_2 = g_2H$ (note the use of normality of $H$). For any $xK\in L$, the set $J_{xK}^L$ of join-irreducible elements in $L$ less than or equal to $xK$ has cardinality $$\lvert J_{xK}^L \rvert = \begin{cases} \lvert xK\cap g_1H\rvert & \text{if } g_1H = g_2H,\\ \lvert xK \cap g_1H\rvert + \lvert xK \cap g_2H\rvert & \text{if } g_1H\neq g_2H.\end{cases}$$ Note that in general, $xK\cap gH$ is either empty or a coset of $K\cap H$. Thus, $\lvert J_{xK}^L\rvert$ is either equal to $\lvert K\cap H\rvert$ or $2\lvert K\cap H\rvert$ (in the second case, this depends on whether only one or both of the intersections are non-empty). If $g_1H\neq g_2H$, because $K\cap H\leq H$ and $\lvert J(L)\rvert = 2\lvert H\rvert$, it follows that $\lvert J_{xK}^L \rvert \mid \lvert J(L)\rvert$ for any $xK\in L$. The same conclusion holds if $g_1H = g_2H$, hence $L$ is strongly coset-like. \end{proof}

We remark that for $\mathscr C(\mathbb Z/8\mathbb Z)$ and $H = \{0, 4\}\leq\mathbb Z/8\mathbb Z$, the sublattice $L\leq\mathscr C(\mathbb Z/8\mathbb Z)$ generated by $J(L) = \{\{0\}, \{1\}, \{2\}, \{4\}, \{5\}, \{6\}\}$ (i.e. such that $J(L)$ consists of $3$ distinct cosets of $H$) is acted on by $H$, yet is not strongly coset-like. This plausibly justifies clause (iii) in the definition of a good sublattice, although this phenomenon is likely part of a more general scheme. As for the normality assumption in clause (i), it serves to control the tension between left and right cosets.

If $L\leq\mathscr C(G)$ is a good sublattice with $H\trianglelefteq G$ acting on it and $J(L)$ corresponds to precisely one coset $Hx$ of $H$, then it is clear that $L\cong\mathscr C(H)$ (by the isomorphism that simply adjoins $x$ everywhere). Therefore, by the first remark of this subsection, we observe that $\Pi_4$ cannot be embedded in a coset lattice as a good sublattice with the set of join-irreducible elements corresponding to a single coset of some normal subgroup. However, it remains an open question whether this can be done using a normal subgroup $H$ of order $3$ of some group $G$ and 2 distinct cosets of $H$.

We close with the following question, which also remains open. All examples of weakly coset-like lattices that are not strongly coset-like in this paper are non-atomistic lattices. A computer search \cite{Damm-Johnsen_WCLvsSCL} up to lattices on 10 points (including a significant portion of lattices on 11 points) failed to find any such atomistic examples. Therefore:  

    \begin{question} Does there exist a weakly coset-like atomistic lattice that is not strongly coset-like? In particular, is the $d$-divisible partition lattice weakly coset-like? \end{question}
    
%    Text of article.

%    Bibliographies can be prepared with BibTeX using amsplain,
%    amsalpha, or (for "historical" overviews) natbib style.
\bibliographystyle{amsplain}
\bibliography{references}

\providecommand{\bysame}{\leavevmode\hbox to3em{\hrulefill}\thinspace}
\providecommand{\MR}{\relax\ifhmode\unskip\space\fi MR }
% \MRhref is called by the amsart/book/proc definition of \MR.
\providecommand{\MRhref}[2]{%
  \href{http://www.ams.org/mathscinet-getitem?mr=#1}{#2}
}
\providecommand{\href}[2]{#2}
\begin{thebibliography}{10}

\bibitem{PfTB}
Martin Aigner and Günter~M. Ziegler, \emph{Proofs from {THE BOOK}}, sixth ed.,
  Berlin, Heidelberg, 2018 (eng).

\bibitem{apostol}
T.M. Apostol, \emph{Introduction to analytic number theory}, Undergraduate
  Texts in Mathematics, Springer New York, 1998.

\bibitem{brown}
Kenneth~S. Brown, \emph{The coset poset and probabilistic zeta function of a
  finite group}, Journal of Algebra \textbf{225} (2000), no.~2, 989--1012.

\bibitem{Damm-Johnsen_WCLvsSCL}
Håvard Damm-Johnsen, \emph{{WCL} v.s. {SCL},
  \texttt{\emph{https://github.com/havarddj/wclvsscl}}}.

\bibitem{griffiths}
Martin Griffiths and Istv{\'a}n Mezo, \emph{A generalization of {S}tirling
  numbers of the second kind via a special multiset}, Journal of Integer
  Sequences \textbf{13} (2010), no.~2, 1--23 (English).

\bibitem{gratzer}
George Grätzer, \emph{General lattice theory, second edition}, 01 1998.

\bibitem{hall}
P.~Hall, \emph{{The Eulerian Functions of a Group}}, The Quarterly Journal of
  Mathematics \textbf{os-7} (1936), no.~1, 134--151.

\bibitem{hardywright}
G.~H. Hardy and E.~M. Wright, \emph{An introduction to the theory of numbers},
  fourth ed., Oxford, 1975.

\bibitem{afrika}
Mihoubi Miloud, Asmaa Rahim, and Said Taharbouchet, \emph{The multiset
  partitions and the generalized {S}tirling numbers}, Afrika Matematika
  \textbf{31} (2020).

\bibitem{nagura}
Jitsuro Nagura, \emph{{On the interval containing at least one prime number}},
  Proceedings of the Japan Academy \textbf{28} (1952), no.~4, 177 -- 181.

\bibitem{stanley}
Richard~P. Stanley, \emph{Enumerative combinatorics: Volume 1}, 2nd ed.,
  Cambridge University Press, USA, 2011.

\bibitem{suzuki}
Michio Suzuki, \emph{Group theory. {I}}, Grundlehren der Mathematischen
  Wissenschaften [Fundamental Principles of Mathematical Sciences], vol. 247,
  Springer-Verlag, Berlin-New York, 1982, Translated from Japanese by the
  author. \MR{648772}

\bibitem{woodroofe}
Russ Woodroofe, \emph{Cubical convex ear decompositions}, The Electronic
  Journal of Combinatorics \textbf{16} (2009), no.~2.

\end{thebibliography}

\end{document}